\renewcommand\eqref[1]{(\ref{#1})}
\def\kk{\mathrm{k}}
\date{August 25, 2012}
\title{Derived categories of Burniat surfaces and exceptional collections}
 \author{Valery
  Alexeev} \address{Department of Mathematics, University of Georgia,
  Athens, GA 30605, USA} \email{valery@math.uga.edu} \author{Dmitri
  Orlov} \address{Algebraic Geometry Section, Steklov Mathematical
  Institute RAS, Gubkin str. 8, Moscow 119991, RUSSIA}
\email{orlov@mi.ras.ru}
\thanks{The first author was supported by the NSF
  under DMS-1200726. The second author was partially supported by  RFBR grants 10-01-93113, 11-01-00336, 11-01-00568, NSh grant 4713.2010.1,
by AG Laboratory HSE, RF government grant, ag. 11.G34.31.0023.}
\begin{document}

\begin{abstract}
  We construct an exceptional collection $\Upsilon$ of maximal
  possible length~6 on any of the Burniat surfaces with $K_X^2=6$, a
  4-dimensional family of surfaces of general type with $p_g=q=~0$. We
  also calculate the DG algebra of endomorphisms of this collection and
  show that the subcategory generated by this collection is the same
  for all Burniat surfaces.
  The semiorthogonal complement $\cA$ of $\Upsilon$ is an ``almost phantom''
  category: it has trivial Hochschild homology, and $K_0(\cA)=\bZ_2^6$.
\end{abstract}

\maketitle

\tableofcontents

\section{Introduction}
\label{sec:intro}

In a surprising recent paper \cite{DerGodeaux}, B\"ohning, Graf von
Bothmer, and Sosna produced an exceptional sequence of maximal
possible length 11 on the classical Godeaux surface, which is the
$\bZ_5$-quotient of the Fermat quintic surface in $\bP^3$. The
computation is quite involved and is heavily computer-aided. It uses
the $E_8$ root system and a very careful study of effective curves on
the Godeaux surface.

In this paper we make a similar but much easier
computation for Burniat surfaces, which can be described either as
Galois $\bZ_2^2$-covers of $\Bl_3 \bP^2$ or as $\bZ_2^3$-quotients of
$(2,2,2)$-divisors in a product of three elliptic curves.

The Godeaux surface has the same Picard lattice as a del Pezzo surface
of degree~1. The Picard lattice of a Burniat surface is isomorphic to
that of a del Pezzo surface of degree 6. So, essentially the $E_8$
lattice of the classical Godeaux surface
 is replaced by a much smaller lattice
$E_3 = A_2 \times A_1$ with a very small Weil group $S_3\times
S_2$. The Picard number of a Burniat surface is 4, and a maximal
exceptional sequence has length only 6.

Note that the results of \cite{DerGodeaux} apply to a unique surface,
and the results of our computations apply to all Burniat surfaces
which form a 4-dimensional family.  For each of these surfaces $X$, we
find an exceptional collection $\Upsilon=(L_1,\ldots L_6)$ of length 6
consisting of line bundles $L_i$. This collection splits into 3 blocks
of sizes $2+3+1$, and the sheaves in the same block are mutually
orthogonal.

The collection $\Upsilon$ gives a semiorthogonal
decomposition for the bounded derived category of coherent sheaves on
$X$ of the form
$$
\mathbf{D}^b(\operatorname{coh}(X))=\langle L_1, \ldots, L_6, \cA\rangle
$$
The admissible subcategory $\cA$ is ``almost phantom'': it has trivial
Hochschild homology $\mathrm{HH}_*(\cA)=0$ and its Grothendieck group
$K_0(\cA)$ is only the torsion group~$\mathbb{Z}_2^6.$

We also calculate the differential graded (DG) algebra of
endomorphisms of the collection $\Upsilon=(L_1,\ldots L_6)$ and show
that it is formal, i.e.  it is quasi-isomorphic to its cohomology
algebra. This algebra is constant in the family, it is the same
for any Burniat surface.

On the other hand, it is well known (see \cite{BondalOrlov}) that a
smooth variety $X$ with ample canonical class can be uniquely
reconstructed from the derived category
$\mathbf{D}^b(\operatorname{coh}(X))$.  This means that quite
surprisingly, in spite of $\cA$ being ``almost phantom'', all
non-trivial variations of a Burniat surface in the 4-dimensional
family are hidden away in the subcategory $\cA$ and a gluing functor
between this subcategory and a fixed subcategory~$\cD.$

 It would be very interesting to understand if there exist admissible
 subcategories in the bounded derived categories of smooth varieties
 for which not only Hochschild homology but also the Grothendieck
 group $K_0$ is trivial (``phantom''
 categories.)  Arguments for and against existence of ``phantom'' and
 ``almost phantom'' categories were previously discussed in the
 literature (see e.g. \cite{Kuznetsov}) and experts' opinions on this
 vary.
 One candidate for finding a ``phantom'' is Barlow surface. See
 the end of \cite{DiemerKatzarkovKerr} for a related conjecture.


\smallskip Throughout the paper, we work over an algebraically closed
field $\kk$ of characteristic different from 2. The only point where
characteristic is possibly important is the moduli of Burniat
surfaces. For any $\kk$ with $\operatorname{char}\kk\ne 2$, there is a
4-dimensional family coming from line arrangements. Over $\bC$,
\emph{all} Burniat surfaces with $K_X^2=6$ are in this family, see the
discussion on page~\pageref{moduli-discussion}.

\begin{acknowledgments}
  We thank Rita Pardini for providing us with a proof of
  Lemma~\ref{lem:pardini} and for helpful comments.
  We also would like to thank the University of
  Vienna and Ludmil Katzarkov for organizing a workshop on Birational
  geometry and Mirror symmetry during which this project was started.
\end{acknowledgments}

\section{Curves on Burniat surfaces}
\label{sec:lattices}

Burniat surfaces are surfaces of general type with $p_g=q=0$ which
were introduced in \cite{Burniat} (see also \cite{Peters_Burniat}) and
from a different point of view by Inoue \cite{Inoue_Surfaces}.
Burniat surfaces come in several deformation families with $2\le
K_X^2\le 6$. We will consider the unique family with $K_X^2=6$, which
is sometimes called ``primary'' Burniat surfaces. For a detailed study
of Burniat surfaces, including the proof of the fact that they are
Inoue surfaces, see \cite{BauerCatanese_Burniat1}.
Some basic facts
about Burniat-Inoue surfaces can also be found in
\cite[VII.11]{BarthHulekPetersVandeVen2004}.

The easiest way to describe Burniat surfaces with $K_X^2=6$ is as
Galois $\bZ_2^2$-covers of the blowup $\Bl_3\bP^2$ of $\bP^2$ at three
points, a toric del Pezzo surface of degree~6.

Recall from \cite{Pardini_AbelianCovers} that a $\bZ_2^2$-cover
$\pi\colon X\to Y$ with smooth and projective $X,Y$ is determined by
three branch divisors $\bar A,\bar B,\bar C$ and three invertible
sheaves $L_1, L_2, L_3$ on the base $Y$ satisfying fundamental
relations $L_2\otimes L_3\simeq L_1(\bar A)$, $L_3\otimes L_1\simeq
L_2(\bar B)$, $L_1\otimes L_2 \simeq L_3(\bar C)$. These relations
imply that $L_1^2 \simeq \cO_Y(\bar B+\bar C)$, $L_2^2 \simeq
\cO_Y(\bar C+\bar A)$, $L_3^2 \simeq \cO_Y(\bar A+\bar B)$.

One has $X=\Spec_Y \cA$, where the $\cO_Y$-algebra $\cA$ is
$\cO_Y\oplus\oplus_{i=1}^3 L_i\inv$. The multiplication is determined
by three sections in
\begin{displaymath}
  \Hom( L_i\inv \otimes L_j\inv, L_k\inv) =
  H^0( L_i\otimes L_j\otimes L_i\inv),
\end{displaymath}
where $\{i,j,k\}$ is a permutation of $\{1,2,3\}$, i.e. by sections of
the sheaves $\cO_Y(\bar A)$, $\cO_Y(\bar B)$, $\cO_Y(\bar C)$
vanishing on $\bar A$, $\bar B$, $\bar C$.

In our case, the divisors $\bar A=\sum_{i=0}^3 \bar A_i$, $\bar
B=\sum_{i=0}^3 \bar B_i$, $\bar C=\sum_{i=0}^3 \bar C_i$ are the ones
shown in red, blue, and black in the central picture of
Figure~\ref{fig:Burniat} below. The del Pezzo surface has two
different contractions to $\bP^2$ related by a quadratic
transformation, and the images of the divisors form a special line
configuration on either $\bP^2$. We denote by $|h_1|$, resp. $|h_2|$ the linear
systems contracting $\Bl_3\bP^2$ to the left, resp. the right $\bP^2$.

\begin{figure}[h]
  \centering
  \includegraphics[height=1.5in]{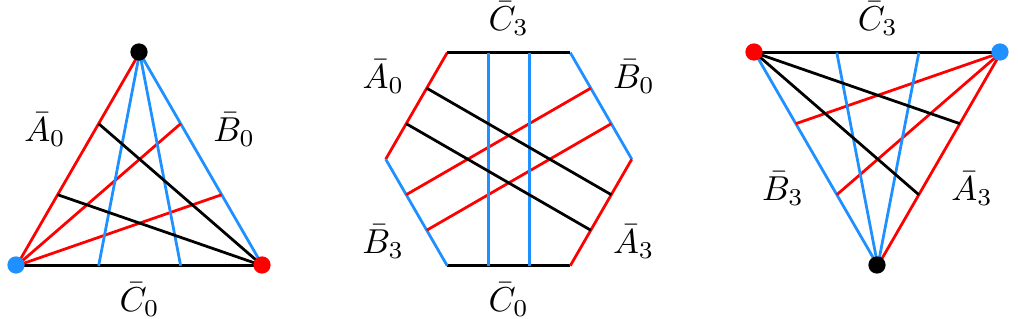}
  \caption{Burniat configuration on $\Bl_3\bP^2$}
  \label{fig:Burniat}
\end{figure}

The curves $\bar A_0$ and $\bar A_3$ are $(-1)$-curves. The curves
$\bar A_1$ and $\bar A_2$ are rational curves with square 0. They are
divisors in a pencil $|f_1|$ which has two reducible fibers $\bar
A_0+\bar C_3$ and $\bar C_0+\bar A_3$. Similarly, $\bar B_1,\bar B_2$
are divisors in a pencil $|f_2|$ and $\bar C_1,\bar C_2$ are divisors
is a pencil $|f_3|$.

We recall the following well known facts about Burniat surfaces:

\smallskip (1) The surface $X$ is smooth iff the configuration of the
branch curves is generic, i.e. the branch divisors do not share
components, at any point no more than two intersect and they belong to
different branch divisors.

\smallskip (2) The map $\pi$ is ramified, with index 2, over the
curves $\bar A_i,\bar B_i,\bar C_i$. Let us denote the corresponding
ramification curves on a Burniat surface by $A_i,B_i,C_i$. One has
$\pi^*(\bar A_i)=2A_i$, etc. For the canonical class, one has
numerically
\begin{displaymath}
  K_X = \pi^*\big( K_Y +\frac12 \sum(\bar A_i+\bar B_i+\bar C_i) \big) =
  \pi^*\big( -\frac12 K_Y\big).
\end{displaymath}
Therefore, $K_X$ is ample and $K_X^2=K_Y^2=6$.

\smallskip (3) One has $h^i(\cO_X)=h^i(\cO_Y)=0$ for $i=1,2$. Thus,
$\chi(O_X)=\chi(O_Y)=~1$. Noether's formula $\chi(\cO) =
(c_1^2+c_2)/12$ implies that $X$ and $Y$ have the same Betti and
Picard numbers.  Hence, $\Pic Y\simeq \bZ^4$ and $\Pic X/\Tors \simeq
\bZ^4$.

\smallskip (4) The torsion subgroup of $\Pic X$ is isomorphic to
$\bZ_2^6$, see \cite{Peters_Burniat}.

\smallskip (5) The fundamental group is an extension of $\bZ_2^3$ by
$\bZ^6$ and is not abelian. This follows from Inoue's construction of
$X$ as a free $\bZ_2^3$-quotient of a divisor in the product of three
elliptic curves, see \cite[p.315]{Inoue_Surfaces},
\cite{BauerCatanese_Burniat1}.

(6) \label{moduli-discussion}
The Burniat configuration of branch curves on $\Bl_3 \bP^2$ is uniquely
determined by a line configuration in $\bP^2$. That one
is described by a 4-dimensional family. Indeed,
for the configuration in, say, the left $\bP^2$, one can fix $\bar
A_0,\bar B_0,\bar C_0$ and $\bar A_1,\bar B_1$; this gives a unique
line configuration with a trivial automorphism group. Then moving the
other 4 lines $\bar A_2,\bar B_2,\bar C_1,\bar C_2$ gives a
4-dimensional family.

Over $\bC$, one knows from Mendez-Pardini \cite{MendezPardini}
that these are \emph{all} the Burniat surfaces
with $K_X^2=6$. They prove that
a deformation of an abelian cover in this case is again an abelian
cover. Since $\bP^2$ does not deform, all the
deformations come from varying the curves, thus varying the lines in $\bP^2$.
The main result of \cite{MendezPardini} is that over
$\bC$ the Burniat surfaces presented above form a connected
component in the moduli space of surfaces of general type.

Moreover, \cite{AlexeevPardini_Burniat} describes the compactification
of this 4-dimensional moduli space obtained by adding stable surfaces,
i.e. surfaces with semi log canonical singularities and ample
canonical class.

\label{lem:lattices}
\begin{lemma}
  The homomorphism $\bar D\mapsto \frac12\pi^*(\bar D)$ defines an
  isomorphism of integral lattices $\frac12\pi^*\colon\Pic Y \to \Pic
  X/\Tors$. One has $\frac12\pi^*(-K_Y) = K_X$.
\end{lemma}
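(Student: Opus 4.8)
The plan is to check, in turn, the three properties that make $\bar D\mapsto\tfrac12\pi^*(\bar D)$ an isomorphism of integral lattices — that it takes values in $\Pic X/\Tors$, that it is an isometry, and that it is surjective — and then to read off the identity $\tfrac12\pi^*(-K_Y)=K_X$ from fact~(2). The first step is the only one carrying geometric content, and I expect it to be the main point: for a general $\bZ_2^2$-cover $\pi^*$ is far from divisible by $2$ modulo torsion, so one genuinely must use the richness of the Burniat branch configuration. Concretely, by fact~(2) one has $\pi^*(\bar A_i)=2A_i$, $\pi^*(\bar B_i)=2B_i$, $\pi^*(\bar C_i)=2C_i$ in $\Pic X$, so $\pi^*(\bar D)$ is divisible by $2$ in $\Pic X$ whenever $\bar D$ is an integral combination of the branch components $\bar A_i,\bar B_i,\bar C_i$; hence it suffices to see that these classes generate $\Pic Y$ over $\bZ$. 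This is read off from Figure~\ref{fig:Burniat}: already the six $(-1)$-curves among the branch components (namely $\bar A_0,\bar A_3$ and their $B$- and $C$-counterparts) are the hexagon of boundary divisors of the toric surface $\Bl_3\bP^2$, and any four cyclically consecutive ones span $\Pic Y$. Since $\Pic X/\Tors$ is torsion free, $\tfrac12\pi^*(\bar D)$ is then unambiguously defined there, and $\tfrac12\pi^*\colon\Pic Y\to\Pic X/\Tors$ is a group homomorphism.

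Next I would show $\tfrac12\pi^*$ is an isometry. The intersection pairing on $\Pic X$ annihilates torsion ($nt=0$ forces $n\,(t\cdot D)=0$ in $\bZ$), so it descends to $\Pic X/\Tors$. For $\bar D,\bar E\in\Pic Y$ the projection formula together with $\deg\pi=4$ gives $\pi^*\bar D\cdot\pi^*\bar E=4\,(\bar D\cdot\bar E)$, hence $\tfrac12\pi^*\bar D\cdot\tfrac12\pi^*\bar E=\bar D\cdot\bar E$. In particular $\tfrac12\pi^*$ is injective, since a nonzero element of its kernel would be orthogonal to all of $\Pic Y$, contradicting the nondegeneracy of the intersection form on $\Pic Y$ (Gram matrix $\operatorname{diag}(1,-1,-1,-1)$ in the basis $h,e_1,e_2,e_3$, of determinant $-1$).

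For surjectivity I would use a determinant count. By fact~(3) both $\Pic Y$ and $\Pic X/\Tors$ are free of rank $4$, so the image $M'=\tfrac12\pi^*(\Pic Y)$ is a finite-index sublattice isometric to $\Pic Y$, whence $\det M'=-1$; on the other hand the intersection form on $\Pic X/\Tors$ is nondegenerate and integral, so $\det(\Pic X/\Tors)$ is a nonzero integer and $\det M'=[\Pic X/\Tors:M']^2\,\det(\Pic X/\Tors)$. Therefore $[\Pic X/\Tors:M']=1$, i.e. $\tfrac12\pi^*$ is an isomorphism of integral lattices (so, in particular, $\Pic X/\Tors$ is unimodular).

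Finally, for the canonical class: unwinding fact~(2) gives $K_X=\pi^*K_Y+\sum_i(A_i+B_i+C_i)$ in $\Pic X/\Tors$ (equivalently, Hurwitz's formula), while the same fact gives $\bar A+\bar B+\bar C=-3K_Y$ in $\Pic Y$. Since $\pi^*(\bar A+\bar B+\bar C)=2\sum_i(A_i+B_i+C_i)$, we get $2K_X=2\pi^*K_Y+\pi^*(\bar A+\bar B+\bar C)=2\pi^*K_Y-3\pi^*K_Y=\pi^*(-K_Y)$, and dividing by $2$ in the torsion-free group $\Pic X/\Tors$ yields $\tfrac12\pi^*(-K_Y)=K_X$.
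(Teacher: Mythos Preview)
Your proof is correct and follows essentially the same route as the paper: both verify integrality of $\tfrac12\pi^*$ via the fact that the branch components generate $\Pic Y$, establish the isometry using $\deg\pi=4$, and deduce surjectivity from the unimodularity of $\Pic Y$. The only difference is cosmetic---you spell out the determinant count and the Hurwitz derivation of $\tfrac12\pi^*(-K_Y)=K_X$ more explicitly, whereas the paper simply reads the latter off from fact~(2).
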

\begin{proof}
  Since $\deg \pi=4$, one has $\frac12 \pi^*(\bar D_1) \cdot \frac12
  \pi^*(\bar D_2) = \frac14 \pi^*(\bar D_1\bar D_2) = \bar D_1\bar
  D_2$. This defines an isomorphism $\Pic Y \otimes \bQ \to \Pic
  X\otimes \bQ$ together with the intersection products. In fact, the
  image of $\Pic Y$ is integral. Indeed, the branch divisors $\bar
  A_i,\bar B_i,\bar C_i$ generate $\Pic Y$ and for each of them
  $\frac12\pi^*(\bar D)$ is an integral cycle. This shows that
  $\frac12\pi^*(\Pic Y) \subset \Pic X/\Tors$ is a sublattice of finite
  index. Since the lattice $\Pic Y$ is unimodular, one must have the
  equality.
\end{proof}

The proof of the lemma shows that numerically we can identify the
curves $\bar D=\bar A_i,\bar B_i,\bar C_i$ downstairs with the
corresponding curves $D:=\frac12\pi^*(\bar D)$ upstairs.  For $i=0,3$,
the curves $A_i,B_i,C_i$ on $X$ are elliptic curves with $D^2=-1$. For
$i=1,2$, they are genus 2 curves with $D^2=0$.

Every point of intersection $\bar P=\bar D_1\cap \bar D_2$ of these
curves on $Y$ gives \emph{only one} point of intersection $P=D_1\cap
D_2$ on $X$ since $D_1D_2=\bar D_1\bar D_2=1$. (Another way to see it:
the divisors $\bar D_1,\bar D_2$ belong to different branch divisors,
so the cover $\pi$ is fully ramified at $P$). Thus, the configuration
of the 12 curves and their intersections on $X$ is exactly the same as
for $Y$, and we can continue to use Figure~\ref{fig:Burniat} to
visualize it.

\begin{notation}\label{notation:ell}
  We give the elliptic curve $A_0$ on $X$ a group structure by fixing
  the point $B_3\cap A_0$ as the origin.  Note that the four points of
  intersection of $A_0$ with the other branch divisors split as $1+3$:
  one point in $A_0\cap B$ and three points in $A_0\cap C$. Our choice
  of the origin is determined by this splitting.

  It is easy to see that the differences between the intersections of
  $A_0$ with $B$ and $C$ are 2-torsion points.  We fix an isomorphism
  of the 2-torsion group $A_0[2]\to\bZ_2^2$ by making the
  identifications
  \begin{displaymath}
    B_3\cap A_0 = 00, \quad C_3\cap A_0 = 10,
    \quad C_1\cap A_0 = 01, \quad C_2\cap A_0 = 11.
  \end{displaymath}
  We make the same identification for the other 5 elliptic curves,
  rotating the hexagon in Figure~\ref{fig:Burniat} cyclically, so 
  that $C_2\cap A_3 = 01$ in $A_3[2]$, and similarly for $B_3,C_3$.

  The subgroup $\bZ.P_{00}+A_0[2] \subset \Pic A_0$ is isomorphic to
  $\bZ\oplus\bZ_2\oplus\bZ_2$. Every element of this subgroup can be
  written as a triple $(a_0^0, a_0^1, a_0^2)$. Similarly, we write
  elements in the groups $\bZ.P_{00}+B_0[2]$, $\bZ.P_{00}+C_0[2]$ as
  triples $(b_0^0, b_0^1, b_0^2)$, $(c_0^0, c_0^1, c_0^2)$.  We use
  similar notation for the three elliptic curves $A_3,B_3,C_3$.

\end{notation}

We thank Rita Pardini for providing us with a proof of the following
Lemma.

\begin{lemma}\label{lem:pardini}
  One has $\cO_{A_0}(K_X) = \cO_{A_0}(-A_0) = \cO_{A_0}( P_{00} )$,
  and similarly for the other 5 elliptic curves.
\end{lemma}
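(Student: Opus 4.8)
The plan is to combine adjunction with one piece of the abelian‑cover data of $\pi\colon X\to Y$. The first equality is formal: $A_0$ is a smooth elliptic curve, so $\omega_{A_0}=\cO_{A_0}$, and adjunction $\omega_{A_0}=(\omega_X\otimes\cO_X(A_0))|_{A_0}$ gives $\cO_{A_0}(K_X)=\cO_{A_0}(-A_0)$. So the content is to compute the normal bundle $\cO_{A_0}(-A_0)$. Since $A_0^2=-1$ we have $\cO_{A_0}(-A_0)=\cO_{A_0}(Q)$ for a unique point $Q\in A_0$. Restricting $\cO_X(2A_0)=\pi^*\cO_Y(\bar A_0)$ to $A_0$ along $f=\pi|_{A_0}\colon A_0\to\bar A_0\cong\bP^1$, and using $\pi^*\bar A_0=2A_0$, $\bar A_0^2=-1$ together with the fact that $f$ is ramified at $P_{00}$, gives $\cO_{A_0}(2A_0)=f^*\cO_{\bP^1}(-1)=\cO_{A_0}(-2P_{00})$. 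Hence $2Q\equiv 2P_{00}$, i.e. $Q\in P_{00}+A_0[2]=\{P_{00},P_{10},P_{01},P_{11}\}$; removing this $2$-torsion ambiguity is the real task, and for that one must use the covering line bundles $L_i$ themselves, not merely their numerical classes.

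The key input is the identity $\pi^*L_3\cong\cO_X(R_A+R_B)$, where $L_3$ is the covering bundle with $L_3^2\simeq\cO_Y(\bar A+\bar B)$ and $R_A=\sum_{i=0}^3A_i$, $R_B=\sum_{i=0}^3B_i$ are the reduced ramification divisors of $\pi$ over $\bar A$, $\bar B$; this is standard for abelian covers (see \cite{Pardini_AbelianCovers}). Concretely, over a trivializing open set for the $L_i$, a local generator $\xi_3$ of the summand $L_3^{-1}\subset\pi_*\cO_X$, viewed as a regular function on the preimage, satisfies $\xi_3^2=\pi^*(s_{\bar A}\,s_{\bar B})$ with $s_{\bar A},s_{\bar B}$ local equations of $\bar A,\bar B$; since $\pi^*\bar A_i=2A_i$ and $\pi^*\bar B_i=2B_i$ this forces $\operatorname{div}_X(\xi_3)=R_A+R_B$, and the resulting local isomorphisms $\pi^*L_3^{-1}\cong\cO_X(-R_A-R_B)$ glue to a global one.

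Now restrict this identity to $A_0$ and compare the two sides. On the left, $\pi^*L_3|_{A_0}=f^*(L_3|_{\bar A_0})$ with $\deg(L_3|_{\bar A_0})=L_3\cdot\bar A_0=\frac12(\bar A+\bar B)\cdot\bar A_0=\frac12(\bar A_0^2+\bar B_3\cdot\bar A_0)=\frac12(-1+1)=0$, using that the components of a single branch divisor are pairwise disjoint and that $\bar B_3$ is the only component of $\bar B$ meeting $\bar A_0$; hence $L_3|_{\bar A_0}\cong\cO_{\bP^1}$ and $\pi^*L_3|_{A_0}\cong\cO_{A_0}$. On the right, the $A_i$ are pairwise disjoint, so $\cO_X(R_A)|_{A_0}=\cO_{A_0}(A_0)$, while $A_0$ meets $R_B=B_0\sqcup\cdots\sqcup B_3$ only in the single point $P_{00}=A_0\cap B_3$, so $\cO_X(R_B)|_{A_0}=\cO_{A_0}(P_{00})$. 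Combining, $\cO_{A_0}(A_0+P_{00})\cong\cO_{A_0}$, that is $\cO_{A_0}(-A_0)=\cO_{A_0}(P_{00})$. The same computation, with the origin on each curve chosen by the cyclic symmetry of Notation~\ref{notation:ell} and with $L_3$ replaced by the adapted covering bundle, gives the statement for the other five elliptic curves. I expect the care to be needed in establishing $\pi^*L_3\cong\cO_X(R_A+R_B)$ as an honest isomorphism of line bundles (not merely modulo torsion) and in keeping straight which $L_i$ and which ramification divisors go together.
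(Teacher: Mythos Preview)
Your proof is correct and essentially the same as the paper's. Both arguments restrict the identity $\pi^*L_3\cong\cO_X(R_A+R_B)$ to $A_0$, use $\deg(L_3|_{\bar A_0})=0$ to get $\pi^*L_3|_{A_0}\cong\cO_{A_0}$, and read off $\cO_{A_0}(A_0)\cong\cO_{A_0}(-P_{00})$; the only cosmetic difference is that the paper establishes the key identity by factoring through the intermediate double cover $W\to Y$ (where the double-cover formula $p^*L=R_A$ is immediate and one then checks $p^*L=\pi^*L_3-R_B$), whereas you argue it directly from the algebra structure of the $\bZ_2^2$-cover.
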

\begin{proof}
  Let $q\colon W\to Y$ be an intermediate double cover corresponding
  to the branch divisor $\bar B+\bar C$, so that $p\colon X\to W$ is
  the double cover for the branch divisor $A'=q\inv(\bar A)$. Note
  that $W$ is singular at the points above $\bar B\cap \bar C$ but is
  smooth in a neighborhood of $\bar A'$, so those points can be
  disregarded in the computation.

  For the double cover $p$ and a connected component $A_0$ of the
  branch divisor, it is immediate that $\cO_{A_0}(A')$ is
  $p^*L|_{A_0}$, where $L$ on $W$ is determined by the equality
  $2L=A'$. One computes that $p^*L=\pi^*L_3-p^* R_2 = \pi^*L_2-p^*
  R_3$, where $R_2$ (resp. $R_3$) is the preimage of $B$ (resp. of
  $C$) on $W$. Plugging this in gives $\cO_{A_0}(A_0) = \cO_{A_0}(A')
  = \cO_{A_0}(-P_{00} )$.
\end{proof}


\begin{theorem}\label{thm:PicX}
  \begin{enumerate}
  \item The homomorphism
    \begin{eqnarray*}
      \phi\colon
      \Pic X &\to& \bZ \times \Pic A_0 \times \Pic B_0 \times \Pic C_0 \\
      L&\mapsto& (d(L)=L\cdot K_X,\  L|_{A_0},\  L|_{B_0},\  L|_{C_0} )
    \end{eqnarray*}
    is injective, and the image is the subgroup of index 3 of
    \begin{displaymath}
      \bZ\times
      (\bZ.P_{00} + A_0[2])
      \times (\bZ.P_{00} + B_0[2])
      \times (\bZ.P_{00} + C_0[2])
      \simeq \bZ^4 \times \bZ_2^6
    \end{displaymath}
    consisting of the elements with $d+a_0^0+b_0^0+c_0^0$ divisible by 3.

  \item $\phi$ induces an isomorphism $\Tors(\Pic X)\to A_0[2]\oplus
    B_0[2]\oplus C_0[2]$.
  \item The curves $A_i,B_i,C_i$, $0\le i\le 3$, generate $\Pic X$.
  \end{enumerate}

\end{theorem}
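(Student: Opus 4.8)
The plan is to reduce all three parts to Lemmas~\ref{lem:lattices} and~\ref{lem:pardini} together with the explicit incidence of the twelve ramification curves $A_i,B_i,C_i$ and the $2$-torsion labellings of Notation~\ref{notation:ell}. The free part of $\phi$ will be controlled by lattice bookkeeping on the unimodular lattice $\Pic Y$, while the torsion part---which I expect to be the real work---will be pinned down by computing the restriction maps $\Pic X\to\Pic A_0,\Pic B_0,\Pic C_0$ directly.

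\emph{That $\phi$ maps into the claimed subgroup $\Gamma$.} For a line bundle $L$ the degree of $L|_{A_0}$ is $L\cdot A_0\in\bZ$; by Lemma~\ref{lem:lattices} the image of $L$ in $\Pic X/\Tors$ is an integral combination of the curves $A_i,B_i,C_i$, each of which restricts to $A_0$ either as $\cO_{A_0}(A_0)=\cO_{A_0}(-P_{00})$ (Lemma~\ref{lem:pardini}) or as $\cO_{A_0}$ of a sum of the points $P_{00},P_{10},P_{01},P_{11}\in A_0[2]$; and a torsion class, being $2$-torsion, restricts into $\Pic^0(A_0)[2]=A_0[2]$. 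Hence $L|_{A_0}\in\bZ.P_{00}+A_0[2]$, and similarly for $B_0,C_0$. For the divisibility condition, $d(L)+a_0^0+b_0^0+c_0^0=L\cdot(K_X+A_0+B_0+C_0)$; by Notation~\ref{notation:ell} and Figure~\ref{fig:Burniat} the curves $\bar A_0,\bar B_0,\bar C_0$ are pairwise disjoint $(-1)$-curves on $Y$, i.e.\ the exceptional locus of one of the two contractions $Y\to\bP^2$, so in the corresponding basis $-K_Y=3\ell-\bar A_0-\bar B_0-\bar C_0$ and therefore $-K_Y+\bar A_0+\bar B_0+\bar C_0\in 3\Pic Y$; carrying this through the isometry $\tfrac12\pi^*$ of Lemma~\ref{lem:lattices} shows $K_X+A_0+B_0+C_0$ is divisible by $3$ in $\Pic X/\Tors$, so $L\cdot(K_X+A_0+B_0+C_0)\equiv 0\pmod 3$ for every $L$.

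\emph{Part (2), the main obstacle.} A torsion class $\tau$ has $d(\tau)=0$ and $\tau|_{A_0},\tau|_{B_0},\tau|_{C_0}$ all of degree $0$, so $\phi$ carries $\Tors\Pic X$ into $A_0[2]\oplus B_0[2]\oplus C_0[2]$, a homomorphism of groups each of order $2^6$ (recall $\Tors\Pic X\simeq\bZ_2^6$); it suffices to show it is injective. I would write down six torsion classes built from the ramification curves---for instance $A_1-A_2$, $B_1-B_2$, $C_1-C_2$ (torsion because $\bar A_1\sim\bar A_2$, $\bar B_1\sim\bar B_2$, $\bar C_1\sim\bar C_2$ in $\Pic Y$, and $2$-torsion because $\pi^*$ of a trivial class is trivial), together with the three reducible-fibre relations such as $A_0+C_3-A_3-C_0$---and compute the restriction of each to $A_0,B_0,C_0$ using the incidence pattern of the twelve curves on $X$ and the identifications $A_0[2]\simeq\bZ_2^2$, $B_0[2]\simeq\bZ_2^2$, $C_0[2]\simeq\bZ_2^2$ fixed in Notation~\ref{notation:ell}. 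The claim of (2) amounts to the resulting $6\times 6$ matrix over $\bZ_2$ being invertible; granting it, these six classes span $\Tors\Pic X$ and $\phi|_{\Tors}$ is an isomorphism. Doing this bookkeeping correctly---keeping straight the incidence of the twelve curves and the three $2$-torsion labellings under the restriction maps---is the delicate point; everything else is formal.

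\emph{Parts (1) and (3).} If $L\in\ker\phi$ then its image $\bar L\in\Pic X/\Tors\simeq\Pic Y$ pairs to zero with each of $K_Y,\bar A_0,\bar B_0,\bar C_0$, which span $\Pic Y\otimes\bQ$, so $\bar L=0$ by nondegeneracy of the intersection form; hence $L\in\Tors\Pic X$, and $L=0$ by part~(2). The image of $\phi$ equals $\Gamma$: it is contained in $\Gamma$ by the first step, it surjects onto $\Tors\Gamma=A_0[2]\oplus B_0[2]\oplus C_0[2]$ by part~(2), and its free part is the image of the injective map $\Pic Y\to\bZ^4$, $\bar L\mapsto(\bar L\cdot(-K_Y),\bar L\cdot\bar A_0,\bar L\cdot\bar B_0,\bar L\cdot\bar C_0)$, which by unimodularity of $\Pic Y$ has index $[\Pic Y:\langle-K_Y,\bar A_0,\bar B_0,\bar C_0\rangle]=3$ in $\bZ^4$, the same as the index of the free part of $\Gamma$; two index-$3$ subgroups of $\bZ^4$ with one containing the other coincide. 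Finally, for (3) the images of $A_i,B_i,C_i$ in $\Pic X/\Tors$ are the branch curves, which generate $\Pic Y$ (used already in the proof of Lemma~\ref{lem:lattices}), and the six torsion classes of part~(2) lie in the subgroup generated by the curves and form a $\bZ_2$-basis of $\Tors\Pic X$; hence $A_i,B_i,C_i$ generate $\Pic X$.
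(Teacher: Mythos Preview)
Your overall architecture is the same as the paper's: handle the free quotient via the unimodular lattice $\Pic Y$, and handle the torsion by comparing $\Tors\Pic X\simeq\bZ_2^6$ with $A_0[2]\oplus B_0[2]\oplus C_0[2]$. The arguments for ``$\phi$ lands in $\Gamma$'', for injectivity modulo torsion, for the index-$3$ identification, and for part~(3) are all fine and match the paper.

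The gap is in your concrete proposal for part~(2). The six torsion classes you name --- $A_1-A_2$, $B_1-B_2$, $C_1-C_2$, and the three reducible-fibre differences such as $A_0+C_3-A_3-C_0$ --- do \emph{not} give an invertible $6\times 6$ matrix. Using the incidence data (equivalently Table~\ref{tab:generators}), one finds that every one of these six classes restricts to $A_0,B_0,C_0$ with second coordinate zero in each $\bZ_2^2$: for example $A_1-A_2$ restricts to $(00,10,00)$ and $A_0+C_3-A_3-C_0$ to $(10,10,00)$. The images span only the $3$-dimensional subspace where $a_0^2=b_0^2=c_0^2=0$, so the matrix has rank $3$. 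Your ``granting it'' is exactly where the argument breaks.

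The fix is to choose classes that mix a genus-$2$ curve with the $(-1)$-type curves, as the paper does: e.g.\ $C_2-C_3-B_0$ is torsion (since $\bar C_2\sim\bar B_0+\bar C_3$ is a reducible fibre of $|f_3|$) and restricts to $A_0$ as $(0\,01)$, hitting the missing coordinate $a_0^2$. The paper sidesteps the need to pick six independent classes in advance by simply checking \emph{surjectivity} onto each summand (e.g.\ $C_2-C_1$ and $C_2-C_3-B_0$ already generate $A_0[2]$), then invoking $|\Tors\Pic X|=2^6$ to conclude isomorphism; this is logically equivalent to your plan but more robust against an unlucky choice of generators.
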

\begin{proof}
  Fix the contraction $p\colon Y\to \bP^2$ for which $\bar A_0,\bar
  B_0, \bar C_0$ are the exceptional divisors, and let $\bar H$ be the
  generator of $\Pic\bP^2$. Then $-K_Y=3\bar H-\bar A_0-\bar B_0-\bar
  C_0$. Since the lattice $\Pic Y=\bZ\bar H+\bZ\bar A_0+\bZ\bar
  B_0+\bZ\bar C_0$ is unimodular, this implies that the homomorphism
  $\bar\phi\colon\Pic X/\Tors=\Pic Y\to \bZ^4$ induced by $\phi$ is
  injective and the image is the subgroup of index 3 consisting of the
  elements with $d+a_0^0+b_0^0+c_0^0$ divisible by 3.
  This also implies  that $\Tors(\Pic X)=\ker\bar\phi$.

  In Table~\ref{tab:generators}, we write down explicitly the
  restrictions of the curves $A_i,B_i,C_i$ to $\Pic A_i,\Pic B_i,\Pic
  C_i$. From this table, it is obvious that the homomorphism
  $\Tors(\Pic X)\to A_0[2]\oplus B_0[2]\oplus C_0[2]$ is
  surjective. (Indeed, $C_2-C_1$ and $C_2-C_3-B_0$ generate $A_0[2]$;
  similarly for $B_0[2]$, $C_0[2]$.) Since $\Tors(\Pic X)\simeq
  \bZ_2^6$ (see \cite{Peters_Burniat} or
  \cite[p.315]{Inoue_Surfaces}), it must be an isomorphism.

  \begin{table}[htbp!]
    \centering
    \begin{tabular}{|c|c||r|r|r||r|r|r|}
      \hline
      & $d$ & $a_0\ $ & $b_0\ $ & $c_0\ $ & $a_3\ $ & $b_3\ $ & $c_3\ $ \\
      \hline

      $A_0$& 1& $-1$ 00& 0 00& 0 00& 0 00& 1 10& 1 00 \\
      $B_0$& 1& 0 00& $-1$ 00& 0 00& 1 00& 0 00& 1 10 \\
      $C_0$& 1& 0 00& 0 00& $-1$ 00& 1 10& 1 00& 0 00 \\
      \hline
      $A_3$& 1& 0 00& 1 10& 1 00& $-1$ 00& 0 00& 0 00 \\
      $B_3$& 1& 1 00& 0 00& 1 10& 0 00& $-1$ 00& 0 00 \\
      $C_3$& 1& 1 10& 1 00& 0 00& 0 00& 0 00& $-1$ 00 \\
      \hline
      $A_1$& 2& 0 00& 1 01& 0 00& 0 00& 1 11& 0 00 \\
      $A_2$& 2& 0 00& 1 11& 0 00& 0 00& 1 01& 0 00 \\
      \hline
      $B_1$& 2& 0 00& 0 00& 1 01& 0 00& 0 00& 1 11 \\
      $B_2$& 2& 0 00& 0 00& 1 11& 0 00& 0 00& 1 01 \\
      \hline
      $C_1$& 2& 1 01& 0 00& 0 00& 1 11& 0 00& 0 00 \\
      $C_2$& 2& 1 11& 0 00& 0 00& 1 01& 0 00& 0 00 \\
      \hline\hline
      $K_X$& 6& 1 00& 1 00& 1 00& 1 00& 1 00& 1 00\\
      \hline

    \end{tabular}

    \medskip
    \caption{Generators of $\Pic X$ and $K_X$ in symmetric coordinates}
    \label{tab:generators}
  \end{table}

  The image of $A_3$ in $\Pic Y$ is $\bar A_3=\bar H-\bar B_0-\bar
  C_0$. So, together with the curves $A_0,B_0,C_0$ it generates $\Pic
  X/\Tors$, and all the curves together generate $\Pic X$.

\end{proof}

\begin{lemma}
  The coordinates with respect to the triple $(A_3,B_3,C_3)$ are
  related to the coordinates with respect to the triple
  $(A_0,B_0,C_0)$ by the formulas
  \begin{displaymath}
    3a_3^0 = d+a_0^0 -2b_0^0-2c_0^0, \quad
    a_3^1 = a_0^1 + b_0^2 + (d+a_0^0+b_0^0)({\rm mod}\ 2),\quad
    a_3^2 = a_0^2,
  \end{displaymath}
  and similarly for $b_3,c_3$ rotating cyclically.
\end{lemma}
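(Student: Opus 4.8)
The plan is to observe that each of the three asserted formulas is an equality between two group homomorphisms on $\Pic X$, so it suffices to check it on a generating set, for which one reads values off Table~\ref{tab:generators}.

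First I would dispose of the $\bZ$-valued formula by interpreting the integer coordinates as intersection numbers: $d(L)=L\cdot K_X$ and $a_0^0=L\cdot A_0$, $b_0^0=L\cdot B_0$, $c_0^0=L\cdot C_0$, $a_3^0=L\cdot A_3$, since the point $P_{00}$ has degree $1$ on each of these curves and the $2$-torsion summand of the restriction contributes nothing to the degree. Pairing with an arbitrary $L$, the identity $3a_3^0=d+a_0^0-2b_0^0-2c_0^0$ therefore becomes the single numerical identity $3A_3=K_X+A_0-2B_0-2C_0$ in $\Pic X/\Tors$, which I would prove exactly as in Theorem~\ref{thm:PicX}: for the contraction $p\colon Y\to\bP^2$ with exceptional divisors $\bar A_0,\bar B_0,\bar C_0$ one has $\bar A_3=\bar H-\bar B_0-\bar C_0$ and $-K_Y=3\bar H-\bar A_0-\bar B_0-\bar C_0$, so $3\bar A_3=-K_Y+\bar A_0-2\bar B_0-2\bar C_0$ in $\Pic Y$; applying the lattice isomorphism $\frac12\pi^*$ of Lemma~\ref{lem:lattices}, which carries $-K_Y$ to $K_X$, gives the identity in $\Pic X/\Tors$.

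For the two formulas modulo $2$, I would first note that Theorem~\ref{thm:PicX} applies verbatim to the triple $(A_3,B_3,C_3)$, using the other contraction $|h_2|$ whose exceptional curves are $\bar A_3,\bar B_3,\bar C_3$; in particular $a_3^1,a_3^2$ are genuine $\bZ_2$-valued coordinates. Then both sides of each formula are homomorphisms $\Pic X\to\bZ_2$: the coordinates $a_0^1,a_0^2,b_0^2,a_3^1,a_3^2$ are by construction $\bZ_2$-components of $\phi$, and $L\mapsto(d(L)+a_0^0(L)+b_0^0(L))\bmod 2$ is the reduction modulo $2$ of a sum of $\bZ$-valued homomorphisms. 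By Theorem~\ref{thm:PicX}(3) the twelve curves $A_i,B_i,C_i$, $0\le i\le 3$, generate $\Pic X$, so it is enough to verify $a_3^1=a_0^1+b_0^2+(d+a_0^0+b_0^0)\bmod 2$ and $a_3^2=a_0^2$ on each of these twelve classes; all the values needed appear in Table~\ref{tab:generators}, and one may even restrict to $A_0,B_0,C_0,A_3$ (spanning $\Pic X/\Tors$) together with torsion generators such as $C_2-C_1$ and $C_2-C_3-B_0$. The formulas for $b_3$ and $c_3$ then follow from the case of $a_3$ by the cyclic symmetry $A\mapsto B\mapsto C\mapsto A$ of the Burniat configuration — the same rotation of the hexagon of Figure~\ref{fig:Burniat} used in Notation~\ref{notation:ell} — after relabeling.

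I expect the bookkeeping of the $\bZ_2$-identifications to be the only step requiring genuine care: one must make sure that the identification $A_3[2]\simeq\bZ_2^2$, and likewise those for $B_0,C_0,B_3,C_3$, obtained by ``rotating the hexagon cyclically'' is precisely the one underlying Table~\ref{tab:generators}, since the exact shape of the second formula — in particular the term $b_0^2$ and the parity correction $(d+a_0^0+b_0^0)$ — is sensitive to this convention. Once the conventions of Notation~\ref{notation:ell} are fixed, the comparison with the table is mechanical.
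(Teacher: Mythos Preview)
Your proposal is correct and follows essentially the same approach as the paper: for $a_3^0$ the paper simply says the formula ``is easy'' (your intersection-theoretic derivation is the expected content of that remark), and for $a_3^1,a_3^2$ the paper obtains the formulas by putting Table~\ref{tab:generators} modulo $2$, viewed as a $12\times 19$ matrix over $\bZ_2$, into reduced row-echelon form --- which is precisely the linear-algebra implementation of your ``both sides are homomorphisms, so check on the twelve generators'' argument. Your version is a bit more explicit about why the verification on generators suffices, but the substance is the same.
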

\begin{proof}
  The formula for $a_3^0$ is easy. For $a_3^1,a_3^2$, the formulas
  come from putting Table~\ref{tab:generators} (mod 2), considered as a
  $12\times 19$ matrix with coefficients in $\bZ_2$, in the reduced
  row-echelon form.
\end{proof}

\section{Exceptional collections on $\Bl_3\bP^2$}

It is well-known that the bounded derived category of coherent sheaves
$\mathbf{D}^b(\operatorname{coh}(S))$ on any del Pezzo surface $S$ has
a full exceptional collection (\cite{Orlov_Monoidal}, see also
\cite{KuleshovOrlov}). This is a particular case of a more general
statement about derived categories of blowups.

First, recall the notion of an exceptional collection.
\begin{definition}
  An object $\mathcal{E}$ of a $\kk$-linear triangulated category
  $\mathcal{D}$ is said to be {\sf exceptional} if
  \begin{displaymath}
    \Hom(\mathcal{E}, \mathcal{E}[m])=
    \Ext^m(\mathcal{E}, \mathcal{E})= 0
    \quad \text{for all } m\ne0,
  \end{displaymath}
  and $\Hom(\mathcal{E}, \mathcal{E})=\kk$.
  An ordered set of exceptional objects $\left(\mathcal{E}_1,\ldots
    \mathcal{E}_n\right)$ is called an {\sf exceptional collection} if
  $\Hom(\mathcal{E}_j, \mathcal{E}_i[m])=0$ for $j>i$ and all $m.$
\end{definition}
\begin{definition} An exceptional collection
  $\left(\mathcal{E}_1,\ldots, \mathcal{E}_n\right)$ in a category
  $\mathcal{D}$ is called {\sf full} if it generates the category
  $\mathcal{D},$ i.e. the minimal full triangulated subcategory of
  $\mathcal{D}$ containing all objects $\mathcal{E}_i$ coincides with
  $\mathcal{D}.$ In this case we say that $\mathcal{D}$ has a
  semiorthogonal decomposition of the form
$$
\mathcal{D}=\left\langle \mathcal{E}_1,\ldots,
  \mathcal{E}_n\right\rangle.
$$
\end{definition}
\begin{definition}
  The exceptional collection $\left(\mathcal{E}_1,\ldots
    \mathcal{E}_n\right)$ is said to be {\sf strong} if it satisfies
  the additional condition $\Hom(\mathcal{E}_j, \mathcal{E}_i[m])=0$
  for all $i,j$ and for $m\ne 0.$
\end{definition}

The most studied example of an exceptional collection is the sequence
of invertible sheaves $\langle
\mathcal{O}_{\mathbb{P}^n},\dots,\mathcal{O}_{\mathbb{P}^n}(n)\rangle$
on the projective space $\mathbb{P}^n.$ This exceptional collection is
full and strong.

\begin{definition} \label{def:algexc} The algebra of a strong
  exceptional collection $\Sigma=\left(\mathcal{E}_1,\ldots,
    \mathcal{E}_n\right)$ is the algebra of endomorphisms
  $B_\Sigma=\End(\mathcal{T})$ of the object
  $\mathcal{T}=\mathop\oplus\limits_{i=1}^{n} \mathcal{E}_i.$
\end{definition}

Assume that the triangulated category $\mathcal{D}$ has a full strong
exceptional collection $\Sigma=\left(\mathcal{E}_1,\ldots,
  \mathcal{E}_n\right)$ and $B_{\Sigma}$ is the corresponding
algebra. Denote by $\operatorname{mod}-B_{\Sigma}$ the category of
finite right modules over $B_{\Sigma}.$ There is a theorem according
to which if $\mathcal{D}$ is an {\it enhanced triangulated category}
in the sense of Bondal and Kapranov \cite{BondalKapranov}, then it is
equivalent to the bounded derived category
$\mathbf{D}^b(\operatorname{mod}-B_{\Sigma})$. This equivalence is
given by the functor $\mathbf{R}\Hom(\mathcal{T}, -)$ (see
\cite{BondalKapranov}).

If a full exceptional collection $\Sigma=\left(\mathcal{E}_1,\ldots,
  \mathcal{E}_n\right)$ in an enhanced triangulated category
$\mathcal{D}$ is not strong, then we can consider a DG algebra (or
$A_{\infty}$-algebra) of endomorphisms
$\mathcal{B}_{\Sigma}=\mathbf{R}\Hom(\mathcal{T}, \mathcal{T})$ and
the same functor will induce an equivalence between $\mathcal{D}$ and
the category of perfect objects $\mathrm{Perf}(\mathcal{B}_{\Sigma})$
over $\mathcal{B}_{\Sigma}$.

For this fact and the main results on DG algebras and DG categories,
we refer the reader to \cite{Keller, Keller_congress}.  For the
notions and techniques of $A_{\infty}$-algebras and
$A_{\infty}$-categories, we refer to \cite{Keller_A_infty,
  Seidel_book, Lefevre}.

Any full subcategory $\mathcal{D}\cong
\mathbf{D}^b(\operatorname{coh}(Z))$ of the bounded derived category
of coherent sheaves on a variety $Z$ is enhanced.  Assume that the
subcategory $\mathcal{D}$ is generated by an exceptional collection
$\Sigma=(\mathcal{E}_1,\ldots, \mathcal{E}_n).$ In this case we obtain
an equivalence
$$
\mathbf{R}\Hom(\mathcal{T},
-):\mathcal{D}\stackrel{\sim}{\longrightarrow}
\operatorname{Perf}(\mathcal{B}_{\Sigma}).
$$
If the exceptional collection $\Sigma$ is full then $\mathcal{D}\cong
\mathbf{D}^b(\operatorname{coh}(Z)),$ if the collection is strong then
the DG algebra $\mathcal{B}_{\Sigma}$ is quasi-isomorphic to the
algebra $B_{\Sigma}$ and
$\operatorname{Perf}(\mathcal{B}_{\Sigma})\cong
\mathbf{D}^b(\operatorname{mod}-B_{\Sigma}).$

\begin{theorem}{\rm \cite{Orlov_Monoidal, KuleshovOrlov}}\label{fecBu}
  Let $p: S_K\to \mathbb{P}^2$ be a blowup of the projective plane
  $\mathbb{P}^2$ at a set $K=\{P_1,\dots, P_k\}$ of any $k$ distinct
  points, and let $E_1,\dots, E_k$ be the exceptional curves of the
  blowup. Then the sequence
  \begin{equation}\label{excol}
    \left(\mathcal{O}_{S_K}, p^* \mathcal{O}_{\mathbb{P}^2}(1), p^* \mathcal{O}_{\mathbb{P}^2}(2), \mathcal{O}_{E_1},\dots,\mathcal{O}_{E_k}\right),
  \end{equation}
  where $\mathcal{O}_{E_i}$ are the structure sheaves of the
  exceptional $-1$-curves $E_i$, is a full strong exceptional
  collection on $S_K$.  In particular, there is an equivalence
  \begin{equation}\label{equiv:del}
    \mathbf{D}^b(\operatorname{coh}(S_K))\cong\mathbf{D}^b(\operatorname{mod}- B_K),
  \end{equation}
  where $B_K$ is the algebra of homomorphisms of the exceptional
  collection (\ref{excol}).
\end{theorem}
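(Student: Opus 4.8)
The plan is to establish Theorem~\ref{fecBu} as an instance of Orlov's blowup formula, realizing $S_K$ as an iterated blowup of $\bP^2$ at the points $P_1,\dots,P_k$ (taken one at a time, which is legitimate since the $P_i$ are distinct closed points of $\bP^2$, hence lie in the smooth locus of each successive blowup). I would proceed in three steps: first, record the semiorthogonal decomposition coming from a single blowup at a point; second, iterate it to produce a semiorthogonal decomposition of $\mathbf{D}^b(\operatorname{coh}(S_K))$ into the pullback of $\mathbf{D}^b(\operatorname{coh}(\bP^2))$ together with $k$ copies of $\mathbf{D}^b(\operatorname{coh}(\operatorname{pt}))$; third, feed in Beilinson's full exceptional collection $(\cO_{\bP^2},\cO_{\bP^2}(1),\cO_{\bP^2}(2))$ on $\bP^2$ and identify the generator of each point-copy with $\cO_{E_i}$, obtaining the collection \eqref{excol}.

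In more detail: Orlov's theorem on derived categories of blowups says that if $\sigma\colon \widetilde Z\to Z$ is the blowup of a smooth variety $Z$ along a smooth center, then $\mathbf{D}^b(\operatorname{coh}(\widetilde Z))$ has a semiorthogonal decomposition with the fully faithful functor $\mathbf{L}\sigma^*$ on one side; when $Z$ is a surface and the center is a point $P$, the extra piece is generated by the single exceptional object $\cO_{E}$, where $E=\sigma^{-1}(P)\cong\bP^1$ is the exceptional $(-1)$-curve, and $\mathbf{L}\sigma^*$ is just $\sigma^*$ since everything is a surface. Applying this to the tower $S_K=S_k\to S_{k-1}\to\cdots\to S_0=\bP^2$, where $S_m$ is the blowup at $P_1,\dots,P_m$, and using that the strict transform of each earlier exceptional curve does not interfere (the later blowup centers avoid it), one gets the semiorthogonal decomposition $\mathbf{D}^b(\operatorname{coh}(S_K))=\langle p^*\mathbf{D}^b(\operatorname{coh}(\bP^2)),\ \cO_{E_1},\dots,\cO_{E_k}\rangle$, where I abuse notation and write $p$ for the total contraction and $E_i$ for the total transforms / exceptional curves (one must check $p^*\cO_{\bP^2}$ restricted to $E_i$ behaves correctly, but $p$ is an isomorphism near a neighborhood pulled back from $\bP^2$, so $\cO_{E_i}$ is literally the structure sheaf of a $(-1)$-curve on $S_K$). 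Splicing in Beilinson's collection on $\bP^2$ replaces the first block by $(\cO_{S_K},p^*\cO_{\bP^2}(1),p^*\cO_{\bP^2}(2))$, yielding \eqref{excol} as a full exceptional collection.

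To upgrade ``full exceptional'' to ``full \emph{strong} exceptional'' one must verify $\Ext^m$-vanishing between \emph{all} pairs of objects for $m\neq 0$, not just the semiorthogonality direction. Within the Beilinson block this is classical. For $\Hom^\bullet(\cO_{E_i},\cO_{E_j})$ with $i\neq j$: the curves $E_i,E_j$ are disjoint, so these $\Ext$-groups vanish in all degrees. For $\Hom^\bullet(\cO_{E_i},\cO_{E_i})$: since $E_i\cong\bP^1$ with normal bundle $\cO(-1)$, a local computation (or the standard formula $\Ext^\bullet_{S_K}(\cO_E,\cO_E)$ for a $(-1)$-curve) gives $\Hom=\kk$ and $\Ext^1=\Ext^2=0$, confirming $\cO_{E_i}$ is exceptional. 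The remaining cross terms $\Hom^\bullet(p^*\cO_{\bP^2}(a),\cO_{E_i})$ and $\Hom^\bullet(\cO_{E_i},p^*\cO_{\bP^2}(a))$ for $a=0,1,2$ must be concentrated in degree $0$: one computes $\mathbf{R}\Hom(p^*\cO_{\bP^2}(a),\cO_{E_i})=\mathbf{R}\Hom(\cO_{\bP^2}(a),\mathbf{R}p_*\cO_{E_i})=\mathbf{R}\Hom(\cO_{\bP^2}(a),\cO_{P_i})$, which sits in degree $0$ (it is $\kk$), while the opposite direction vanishes by semiorthogonality since $\cO_{E_i}$ comes after $p^*\cO_{\bP^2}(a)$ in the ordering. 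Thus the collection is strong.

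Finally, the equivalence \eqref{equiv:del} is then immediate from the general tilting formalism recalled just before the theorem: $\mathbf{D}^b(\operatorname{coh}(S_K))$ is an enhanced triangulated category admitting a full strong exceptional collection $\Sigma$, so $\mathbf{R}\Hom(\mathcal{T},-)$ with $\mathcal{T}$ the direct sum of the members of \eqref{excol} induces $\mathbf{D}^b(\operatorname{coh}(S_K))\cong\mathbf{D}^b(\operatorname{mod}-B_K)$ with $B_K=\End(\mathcal{T})$. The main obstacle, and the only place requiring genuine care rather than citation, is the bookkeeping in the iterated-blowup step: one has to be sure that blowing up the points successively is compatible with blowing them up ``all at once,'' that the exceptional divisors $E_i$ on $S_K$ really are pairwise disjoint smooth $(-1)$-curves with structure sheaves unaffected by the later blowups, and that the semiorthogonal blocks assemble in the claimed order; all of this is standard but deserves an explicit sentence. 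Everything else reduces to Orlov's blowup formula, Beilinson's collection, and short local cohomology computations on $\bP^1\subset S_K$.
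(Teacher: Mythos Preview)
The paper does not give its own proof of this theorem: it is stated with a citation to \cite{Orlov_Monoidal, KuleshovOrlov} and then used as input. So there is nothing in the paper to compare your argument against line by line.

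That said, your sketch is correct and is essentially the argument of \cite{Orlov_Monoidal}: Orlov's blowup formula gives the semiorthogonal decomposition $\mathbf{D}^b(\operatorname{coh}(S_K))=\langle p^*\mathbf{D}^b(\operatorname{coh}(\bP^2)),\ \cO_{E_1},\dots,\cO_{E_k}\rangle$, Beilinson's collection fills in the first block, and the strongness check is the short computation you indicate. The only remark I would add is that the iteration step is immediate here because the $P_i$ are distinct points of $\bP^2$, so one may blow them up simultaneously rather than one at a time; the resulting exceptional divisors are then manifestly disjoint $(-1)$-curves and no ``bookkeeping'' about strict transforms is needed.
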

There are no restrictions on the set of points $K=\{P_1,\dots,P_k\}$
in this theorem and, in particular, we do not need to assume that
$S_K$ is a del Pezzo surface.

Exceptional objects and exceptional collections on del Pezzo surfaces
are well-studied objects.  First, any exceptional object of the
derived category is isomorphic to a sheaf up to translation.  Second,
any exceptional sheaf can be included in a full exceptional
collection.  Third, any full exceptional collection can be obtained
from a given one by a sequence of natural operations on exceptional
collections called {\it mutations}. All these facts can be found in
the paper \cite{KuleshovOrlov}.

An exceptional collection is called a {\sf block} if $\mathcal{E}_i$
and $\mathcal{E}_j$ are mutually orthogonal for all $i\ne j,$
i.e. $\Hom(\mathcal{E}_j, \mathcal{E}_i[m])=0$ for all $m$ and $i\ne
j.$ For example, the sheaves $(\mathcal{O}_{E_1}, \ldots,
\mathcal{O}_{E_k})$ on $S_K$ form a block.

A remarkable fact is that any del Pezzo surface $S_K$ with $3\le
k\le 8$ possesses a full exceptional collection consisting of three
blocks (see \cite{KarpovNogin}).

Consider the del Pezzo surface $Y=\Bl_3\bP^2$ that is a blow up
of projective plane at three points.  As in
Section~\ref{sec:lattices}, we use $f_1,f_2,f_3$ to denote the
divisors defining the special pencils on $Y$, and $h_1,h_2$ for the
divisors defining the contractions $ Y\to \bP^2$.

\begin{theorem}[\cite{KarpovNogin}]
  The following collection on $Y=\Bl_3\bP^2$
  \begin{equation}\label{col:del}
    \Sigma=  \left( \cO_Y, \quad
      \cO_Y(f_1), \cO_Y(f_2), \cO_Y(f_3), \quad
      \cO_Y(h_1), \cO_Y(h_2) \right)
  \end{equation}
  is a full strong exceptional collection that is split into three
  blocks of sizes 1+3+2, and the sheaves in the same block are
  mutually orthogonal.
\end{theorem}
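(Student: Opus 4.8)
The plan is to reduce every required $\Ext$‑vanishing to the cohomology of an explicit finite list of line bundles on $Y$, and then to deduce fullness from Theorem~\ref{fecBu} by exhibiting the Orlov collection inside the subcategory generated by $\Sigma$. I would first fix the contraction $p\colon Y\to\bP^2$ with exceptional divisors $E_1,E_2,E_3$ and put $H=p^{*}\cO_{\bP^2}(1)$, so that $\Pic Y=\bZ H\oplus\bZ E_1\oplus\bZ E_2\oplus\bZ E_3$ is unimodular, $-K_Y=3H-E_1-E_2-E_3$, and $f_i=H-E_i$, $h_1=H$, $h_2=2H-E_1-E_2-E_3$ with $f_1+f_2+f_3=h_1+h_2=-K_Y$. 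Exceptionality of each $\cO_Y(D)$ is automatic, since $\Ext^{\bullet}(\cO_Y(D),\cO_Y(D))=H^{\bullet}(Y,\cO_Y)=\kk$. Everything else --- mutual orthogonality inside each block, semiorthogonality of the blocks $\langle\cO_Y\rangle$, $\langle\cO_Y(f_1),\cO_Y(f_2),\cO_Y(f_3)\rangle$, $\langle\cO_Y(h_1),\cO_Y(h_2)\rangle$ in this order, and the ``strong'' (degree $\ne 0$) vanishing --- comes down to computing $H^{\bullet}(Y,\cO_Y(D))$ for $D$ ranging over the differences of the six classes $0,f_1,f_2,f_3,h_1,h_2$. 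Writing $L_i:=H-E_j-E_k$ (with $\{i,j,k\}=\{1,2,3\}$) for the three $(-1)$‑curves contracted by the other ruling, these differences are $0$, $\pm f_i$, $\pm h_l$, $\pm E_i=\pm(h_1-f_i)$, $\pm L_i=\pm(h_2-f_i)$, $\pm(E_i-E_j)$, and $\pm(H-E_1-E_2-E_3)$.

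Each such class is dispatched by one of three elementary devices. If $C$ is one of the $(-1)$‑curves $E_i$ or $L_i$, the structure sequences $0\to\cO_Y(-C)\to\cO_Y\to\cO_C\to 0$ and $0\to\cO_Y\to\cO_Y(C)\to\cO_C(C)\to 0$, in which $\cO_C\cong\cO_{\bP^1}$ and $\cO_C(C)\cong\cO_{\bP^1}(-1)$, give $H^{\bullet}(\cO_Y(-C))=0$ and $H^{>0}(\cO_Y(C))=0$. Secondly, $\cO_Y(h_1)$ and $\cO_Y(h_2)$ are the pullbacks of $\cO_{\bP^2}(1)$ along the two contractions $Y\to\bP^2$, while $\cO_Y(f_i)$ is the pullback of $\cO_{\bP^1}(1)$ along the conic bundle $Y\to\bP^1$ attached to $|f_i|$; as the relevant higher direct images of $\cO_Y$ vanish, $H^{>0}(\cO_Y(f_i))=H^{>0}(\cO_Y(h_l))=0$ (and $h^{0}(\cO_Y(f_i))=2$). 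Thirdly, for $D\in\{-f_i,\,-h_l,\,\pm(E_i-E_j),\,\pm(H-E_1-E_2-E_3)\}$ one has $D\ne 0$, $D\ne K_Y$, and $D\cdot(-K_Y)\le 0$; ampleness of $-K_Y$ then forces both $D$ and $K_Y-D$ to be non‑effective, whence $H^{0}(\cO_Y(D))=H^{2}(\cO_Y(D))=0$, and since $\chi(\cO_Y(D))=1+\tfrac12 D\cdot(D-K_Y)=0$ also $H^{1}(\cO_Y(D))=0$. Collating these vanishings against the finite list of ordered pairs in $\Sigma$ shows that $\Sigma$ is a strong exceptional collection split into the asserted three blocks. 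The del Pezzo hypothesis (the three points not collinear) enters precisely once: to guarantee that $H-E_1-E_2-E_3$ is non‑effective and that the $L_i$ are honest smooth rational $(-1)$‑curves.

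For fullness, let $\mathcal{T}\subseteq\mathbf{D}^{b}(\operatorname{coh}(Y))$ be the triangulated subcategory generated by $\Sigma$; it is enough to show that $\mathcal{T}$ contains the full collection $(\cO_Y,\cO_Y(H),\cO_Y(2H),\cO_{E_1},\cO_{E_2},\cO_{E_3})$ of Theorem~\ref{fecBu}. Now $\cO_Y$ and $\cO_Y(H)=\cO_Y(h_1)$ already belong to $\Sigma$. Since $h^{0}(\cO_Y(h_1-f_i))=h^{0}(\cO_Y(E_i))=1$, for each $i$ there is, up to scalar, a unique nonzero map $\cO_Y(f_i)\to\cO_Y(h_1)$ --- multiplication by the section of $\cO_Y(E_i)$ with divisor $E_i$ --- and it is injective with cokernel $\cO_Y(h_1)|_{E_i}\cong\cO_{E_i}$ (because $h_1\cdot E_i=0$); hence $\cO_{E_i}\in\mathcal{T}$. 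Then the three short exact sequences
\[
0\to\cO_Y(h_2+E_1+\dots+E_{s-1})\to\cO_Y(h_2+E_1+\dots+E_s)\to\cO_{E_s}\to 0,\qquad s=1,2,3,
\]
in which the quotient really is $\cO_{E_s}$ since $(h_2+E_1+\dots+E_s)\cdot E_s=0$, place $\cO_Y(h_2+E_1)$, then $\cO_Y(h_2+E_1+E_2)$, then $\cO_Y(h_2+E_1+E_2+E_3)=\cO_Y(2H)$ into $\mathcal{T}$, starting from $\cO_Y(h_2)\in\Sigma$ and the $\cO_{E_i}$ just obtained. So $\mathcal{T}$ contains a generating collection, giving $\mathcal{T}=\mathbf{D}^{b}(\operatorname{coh}(Y))$, i.e.\ $\Sigma$ is full.

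None of these steps is deep; the real work is bookkeeping. The one point that must be handled carefully is the fullness argument: the maps in the exact sequences have to be chosen so that the quotients are exactly the structure sheaves of the $(-1)$‑curves, and that is exactly what the degree computations of the second paragraph supply. (One could instead avoid the explicit construction altogether: $\Sigma$ has length $6=\operatorname{rk}K_{0}(Y)$, and on a del Pezzo surface an exceptional collection of maximal length is automatically full, since every exceptional collection there extends to a full one.)
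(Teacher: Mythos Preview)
The paper does not give its own proof of this theorem: it is stated as a citation to Karpov--Nogin, and the paper only follows up with the computation of the nonzero $\Hom$'s needed for the endomorphism algebra $B_\Sigma$. So there is no ``paper's proof'' to compare against; you have supplied a self-contained argument where the paper simply invokes the literature.

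Your argument is correct. The vanishing bookkeeping in the first two paragraphs is accurate (in particular the three devices --- the structure sequences for $(-1)$-curves, pullback along the two $\bP^2$-contractions and the three conic bundles, and the ampleness/$\chi=0$ trick --- cover exactly the finite list of differences), and the fullness argument is clean: producing $\cO_{E_i}$ as $\operatorname{coker}\big(\cO_Y(f_i)\to\cO_Y(h_1)\big)$ and then building $\cO_Y(2H)$ from $\cO_Y(h_2)$ and the $\cO_{E_i}$ recovers the collection of Theorem~\ref{fecBu}. The parenthetical alternative you mention at the end --- that on a del Pezzo surface an exceptional collection of maximal length is automatically full --- is exactly the Kuleshov--Orlov route the paper cites elsewhere; your explicit reduction to Theorem~\ref{fecBu} has the advantage of being elementary and not relying on the transitivity of mutations.
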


It is easy to see that this collection is strong and to calculate the
algebra of endomorphisms $B_{\Sigma}$ of this exceptional collection.
There are only following nontrivial Hom's between objects of $\Sigma$
$$
\begin{array}{lll}
  \operatorname{Hom}(\cO_Y, \cO_Y(f_i))\cong \kk^2 & \text{for all} & i=1,2,3\\
  \operatorname{Hom}(\cO_Y, \cO_Y(h_j))\cong \kk^3 & \text{for all} &  j=1,2\\
  \operatorname{Hom}(\cO_Y(f_i), \cO_Y(h_j))\cong \kk &
  \text{for all} & i=1,2,3; j=1,2.
\end{array}
$$
With evident composition law they completely define the algebra
$B_{\Sigma}$ of endomorphisms of the collection $\Sigma.$ Thus we obtain

\begin{proposition} There is an equivalence
$$
\mathbf{D}^b(\operatorname{coh}(Y))\cong
\mathbf{D}^b(\operatorname{mod}-B_{\Sigma}),
$$
where $B_{\Sigma}$ is the algebra of endomorphisms of the exceptional
collection $\Sigma$ (\ref{col:del}).
\end{proposition}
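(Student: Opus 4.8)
The plan is to combine Theorem~\ref{fecBu} (equivalently its specialization to $Y=\Bl_3\bP^2$, which is the Karpov--Nogin theorem just quoted) with the general DG-algebra formalism recalled above. Since the collection $\Sigma$ of \eqref{col:del} is full and strong on $Y=\Bl_3\bP^2$, the object $\mathcal{T}=\bigoplus_{i}\mathcal{E}_i$ is a tilting object: it generates $\mathbf{D}^b(\operatorname{coh}(Y))$, and $\operatorname{Ext}^m(\mathcal{T},\mathcal{T})=0$ for $m\neq 0$, so the DG algebra $\mathcal{B}_\Sigma=\mathbf{R}\!\operatorname{Hom}(\mathcal{T},\mathcal{T})$ is quasi-isomorphic to the ordinary algebra $B_\Sigma=\operatorname{End}(\mathcal{T})$. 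By the Bondal--Kapranov / Beilinson-type theorem stated in the text, the functor $\mathbf{R}\!\operatorname{Hom}(\mathcal{T},-)$ then gives an equivalence $\mathbf{D}^b(\operatorname{coh}(Y))\xrightarrow{\ \sim\ }\operatorname{Perf}(\mathcal{B}_\Sigma)\cong\mathbf{D}^b(\operatorname{mod}\text{-}B_\Sigma)$. This is exactly the asserted equivalence, so at the level of citing prior results the proposition is essentially immediate.

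What remains, and what the proposition is really recording, is the identification of $B_\Sigma$ via generators and relations. First I would verify strongness and compute the graded pieces: $\operatorname{Ext}^m=0$ for $m\neq 0$ holds because each $\mathcal{E}_i$ is a line bundle on a surface with $-K_Y$ nef, so $H^{>0}$ of any difference $\mathcal{E}_j^\vee\otimes\mathcal{E}_i$ of these specific bundles vanishes (one checks $f_i^2=0$, $h_j^2=1$, $h_j\cdot f_i=1$, $f_i\cdot f_j=1$ for $i\neq j$, and all the relevant differences are effective or anti-effective with no higher cohomology). The $\operatorname{Hom}$-dimensions are then forced by Riemann--Roch / Euler characteristic on $Y$: $\chi(\cO_Y,\cO_Y(f_i))=2$, $\chi(\cO_Y,\cO_Y(h_j))=3$, $\chi(\cO_Y(f_i),\cO_Y(h_j))=1$, and $\operatorname{Hom}(\cO_Y,\cO_Y)=\kk$, while all the $\operatorname{Hom}$'s in the ``wrong'' direction vanish by the semiorthogonality/block structure already asserted. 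This gives the table of nonzero Hom-spaces displayed before the proposition.

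Second, I would pin down the composition law. The algebra $B_\Sigma$ is the path algebra of a quiver with three ordered levels: one vertex $v_0$, three vertices $v_{f_1},v_{f_2},v_{f_3}$, two vertices $v_{h_1},v_{h_2}$; two arrows $v_0\to v_{f_i}$, three arrows $v_0\to v_{h_j}$, and one arrow $v_{f_i}\to v_{h_j}$ for each $i,j$. The relations are the commutativity relations saying that for each pair $(i,j)$ the composite $v_0\to v_{f_i}\to v_{h_j}$ equals a fixed linear combination of the two direct arrows $v_0\to v_{h_j}$; concretely, if $\cO_Y(f_i)$ is resolved as the ideal of a length-two subscheme and $\cO_Y(h_j)$ by sections of a line bundle, the composition $H^0(\cO_Y(f_i))\otimes\operatorname{Hom}(\cO_Y(f_i),\cO_Y(h_j))\to H^0(\cO_Y(h_j))$ is just multiplication of sections, and one reads off the single relation per square from the linear dependence among the resulting four products inside the $3$-dimensional space $H^0(\cO_Y(h_j))$. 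Spelling this out, together with the obvious vanishing of any length-$\geq 3$ path (there is no arrow out of any $v_{h_j}$ and none into $v_0$), completely determines $B_\Sigma$.

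The main obstacle is not conceptual but bookkeeping: making the composition map $\operatorname{Hom}(\cO_Y,\cO_Y(f_i))\otimes\operatorname{Hom}(\cO_Y(f_i),\cO_Y(h_j))\to\operatorname{Hom}(\cO_Y,\cO_Y(h_j))$ completely explicit and checking it is surjective with the expected kernel, so that the ``evident composition law'' really does present $B_\Sigma$ on the nose rather than up to some subtlety. I expect this to be handled exactly as the text does — by declaring the composition law ``evident'' once the Hom-spaces are computed — since $Y$ is toric and all the line bundles here are globally generated, so there is nothing pathological; the identification of $B_\Sigma$ with this particular bound quiver algebra is then routine, and the equivalence $\mathbf{D}^b(\operatorname{coh}(Y))\cong\mathbf{D}^b(\operatorname{mod}\text{-}B_\Sigma)$ follows as above.
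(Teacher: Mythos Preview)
Your proposal is correct and matches the paper's approach: the proposition is stated in the paper without a separate proof, as an immediate consequence (``Thus we obtain'') of the fact that $\Sigma$ is full and strong together with the Bondal--Kapranov tilting formalism recalled earlier, which is precisely your first paragraph. One small slip: Theorem~\ref{fecBu} and the Karpov--Nogin theorem give \emph{different} full strong exceptional collections on $Y$ (the former has the $\cO_{E_i}$'s, the latter is the three-block collection $\Sigma$), so they are not literally specializations of one another; but since you correctly invoke the Karpov--Nogin result for fullness and strongness of $\Sigma$, this does not affect the argument.
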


\section{Exceptional collections on Burniat surfaces}

We begin with the following numerical statement:

\begin{lemma}\label{lem:lift}
  Let $\bar L_1,\bar L_2$ be two line bundles on $Y$ and $L_1, L_2$ be
  any line bundles on $X$ lifting them under the projection $\Pic X\to
  \Pic X/\Tors=\Pic Y$. Then one has
  \begin{math}
    \chi(X, L_1 \otimes L_2\inv ) = \chi(Y, \bar L_2 \otimes \bar
    L_1\inv)
  \end{math}
\end{lemma}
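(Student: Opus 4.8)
The plan is to use the Hirzebruch–Riemann–Roch theorem on $X$ to express $\chi(X, L_1 \otimes L_2\inv)$ purely in terms of intersection numbers in $\Pic X/\Tors = \Pic Y$, and then to match the result against HRR on $Y$. The crucial point is that the Euler characteristic depends only on numerical (indeed, $\Pic/\Tors$) data: if $M = L_1 \otimes L_2\inv$ and $\bar M = \bar L_1 \otimes \bar L_2\inv$ is its image in $\Pic Y$, then for any line bundle $N$ algebraically (even numerically) equivalent to zero one has $\chi(X, M) = \chi(X, M \otimes N)$, because $\chi$ is computed by a polynomial in the Chern classes evaluated against Todd classes, and $c_1(M \otimes N) \equiv c_1(M)$ numerically while $c_1^2$ is unchanged. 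Hence $\chi(X, L_1 \otimes L_2\inv)$ depends only on the class of $\bar M$ in $\Pic Y$, justifying the statement of the lemma (note the hypothesis already guarantees $\bar M$ is well defined, so no choice of lift matters).

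Next I would carry out the HRR computation on both surfaces. On $X$, HRR gives
\begin{displaymath}
  \chi(X, M) = \chi(\cO_X) + \tfrac12 M\cdot(M - K_X)
             = 1 + \tfrac12\big(M^2 - M\cdot K_X\big),
\end{displaymath}
using $\chi(\cO_X) = 1$ from fact (3) in Section~\ref{sec:lattices}. Under the lattice isomorphism $\tfrac12\pi^*\colon \Pic Y \to \Pic X/\Tors$ of Lemma~\ref{lem:lattices}, which preserves intersection forms and sends $-K_Y$ to $K_X$, the image of $\bar M$ is exactly the numerical class of $M$, so $M^2 = \bar M^2$ and $M\cdot K_X = \bar M \cdot(-K_Y) = -\bar M\cdot K_Y$. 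Substituting, $\chi(X,M) = 1 + \tfrac12(\bar M^2 + \bar M\cdot K_Y)$.

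Finally, on $Y$ apply HRR to $\bar L_2 \otimes \bar L_1\inv = -\bar M$ (here $-\bar M$ denotes the inverse line bundle), using $\chi(\cO_Y)=1$:
\begin{displaymath}
  \chi(Y, -\bar M) = 1 + \tfrac12\big((-\bar M)^2 - (-\bar M)\cdot K_Y\big)
                   = 1 + \tfrac12\big(\bar M^2 + \bar M\cdot K_Y\big),
\end{displaymath}
which is exactly the expression obtained for $\chi(X, M)$. This completes the proof. I do not anticipate a genuine obstacle here; the only mild subtlety is the invariance of $\chi$ under twisting by torsion line bundles, which is what makes the right-hand side well defined and independent of the lifts, and this follows immediately from the polynomial nature of the HRR formula in $c_1$, $c_1^2$ once one notes torsion classes are numerically trivial.
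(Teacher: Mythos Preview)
Your proof is correct and follows essentially the same approach as the paper: apply Riemann--Roch on both surfaces and use the lattice isomorphism $\tfrac12\pi^*$ together with $K_X\leftrightarrow -K_Y$ to match the quadratic terms. The paper's version is a one-line computation that suppresses the equal constants $\chi(\cO_X)=\chi(\cO_Y)=1$ and the torsion-invariance remark, but the substance is identical.
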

\begin{proof}
  Denoting $\cO_Y(\bar D)=\bar L_1\otimes \bar L_2\inv$ and
  $\cO_X(D)=L_1\otimes L_2\inv$, by Riemann-Roch:
  \begin{displaymath}
    \chi(X, D) = \frac{D(D-K_X)}2 = \frac{\bar D(\bar D+K_Y)}2 =
    \frac{-\bar D(-\bar D-K_Y)}2 = \chi(Y, -\bar D).
  \end{displaymath}
\end{proof}

\begin{corollary}\label{cor:num_exc}
  For any exceptional sequence $(\bar L_1,\dotsc \bar L_n)$ of line
  bundles on $Y$, $(L_n, \dotsc, L_1)$ is a {\sf numerical}
  exceptional sequence on $X$, i.e. $\chi(L_i\otimes L_j\inv)=0$ for
  $i>~j$.  If in addition $H^0(X, L_i\otimes L_j\inv)=H^0(X,
  L_i\otimes L_j\inv)=0$ for $i>j$ then also $H^1(X, L_i\otimes
  L_j\inv)=0$ and $L_n,\dotsc L_1$ is an exceptional collection on
  $X$.
\end{corollary}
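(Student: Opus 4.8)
The plan is to reduce everything to Euler-characteristic bookkeeping plus a vanishing statement for $H^0$, and then extract $H^1$-vanishing for free. First I would observe that by Serre duality on $X$, combined with the fact that $K_X$ is the image of $-\tfrac12 K_Y$ under the identification of Lemma~\ref{lem:lattices}, the Euler characteristic $\chi(X, L_i\otimes L_j\inv)$ depends only on the numerical class, hence only on the classes $\bar L_i,\bar L_j$ in $\Pic Y$; this is exactly the content of Lemma~\ref{lem:lift}. Applying that lemma with the roles of $i$ and $j$ swapped gives $\chi(X, L_i\otimes L_j\inv) = \chi(Y,\bar L_j\otimes \bar L_i\inv)$ for all $i,j$. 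Now for $i>j$ the pair $(\bar L_j,\bar L_i)$ sits inside the exceptional sequence in the order $\bar L_j$ before $\bar L_i$, so $\Hom^m(\bar L_i,\bar L_j)=0$ for all $m$; in particular $\chi(Y,\bar L_j\otimes\bar L_i\inv)=\sum_m (-1)^m \dim\Hom^m(Y,\bar L_i,\bar L_j)=0$. This proves the first assertion: $(L_n,\dotsc,L_1)$ is a numerical exceptional sequence on $X$.

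For the second assertion, fix $i>j$ and set $M = L_i\otimes L_j\inv$; the claim is that the hypotheses $H^0(X,M)=0$ force $H^2(X,M)=0$ as well, whence $H^1(X,M)=0$ from $\chi(X,M)=0$. The $H^2$-vanishing is Serre duality: $H^2(X,M)\cong H^0(X, K_X\otimes M\inv)\vphantom{)}^{\vee} = H^0(X, K_X\otimes L_j\otimes L_i\inv)\vphantom{)}^{\vee}$. So I need to know that $L_j\otimes L_i\inv\otimes K_X$ has no sections. Numerically $K_X = \tfrac12\pi^*(-K_Y)$ corresponds to $-K_Y$ on $Y$, and since $\bar L_i\otimes\bar L_j\inv$ appears "the wrong way" in the exceptional sequence one expects $H^*(Y,\bar L_j\otimes\bar L_i\inv\otimes\cO_Y(-K_Y))$ to behave well — but the cleanest route is to note that $K_X\otimes L_j\otimes L_i\inv$ is, up to torsion, of the form treated by the $H^0$-hypothesis applied to a different pair in the collection, or else to invoke directly that on a surface of general type with the given Hodge numbers the only effective classes among these explicit line bundles are controlled by Table~\ref{tab:generators}. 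The short statement in the corollary suggests the authors simply want: $H^0(X,M)=0$ together with $\chi(X,M)=0$ and the Serre-dual vanishing of $H^2$, and the last is meant to be subsumed in "the hypothesis", so I would state it as: $H^0(X,M)=H^2(X,M)=0$ (the latter via Serre duality and the analogous $H^0$ input), hence $h^1(X,M) = h^0-h^1+h^2 - \chi = -\chi(X,M) = 0$, and therefore $\Hom^m(X,L_i,L_j)=0$ for all $m$ when $i>j$; combined with the fact that each $L_i$ is a line bundle (so automatically exceptional, $\Hom(L_i,L_i)=\kk$ and $\Ext^{>0}(L_i,L_i)=0$ would itself need $H^{>0}(\cO_X)=0$, which is fact (3) in Section~\ref{sec:lattices}), this makes $(L_n,\dotsc,L_1)$ an exceptional collection.

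The main obstacle is the $H^2$-vanishing, i.e. controlling $H^0(X, K_X\otimes L_j\otimes L_i\inv)$: unlike $H^0(X,M)$, this is not literally one of the hypotheses as I have read them, so either (a) the intended reading is that $H^0$ vanishes for the relevant twisted bundle too — note the corollary's hypothesis line has an apparent typo, writing $H^0(X,L_i\otimes L_j\inv)=H^0(X,L_i\otimes L_j\inv)=0$ twice, and the second occurrence is surely meant to be $H^0(X, K_X\otimes L_j\otimes L_i\inv)=0$ or equivalently $H^2(X,L_i\otimes L_j\inv)=0$ — or (b) one must argue separately that for line bundles lifting an exceptional sequence on $Y$ this twisted $H^0$ vanishes automatically, presumably again by pulling back to the analogous computation on the del~Pezzo surface $Y$ where $\cO_Y(-K_Y)$ is ample and $\bar L_j\otimes\bar L_i\inv$ is suitably negative. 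I would resolve this by reading the hypothesis as in (a), so that the proof is the two-line Serre-duality-plus-Riemann-Roch argument above; if (b) is intended, the extra work is a finite check against Table~\ref{tab:generators} of which differences $\bar L_i-\bar L_j$ can become effective after adding $-K_Y$, which is routine given the explicit collection $\Upsilon$ but is not a general statement about arbitrary exceptional sequences on $Y$.
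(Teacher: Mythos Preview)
Your proposal is correct and matches the paper's intended argument. The paper gives no explicit proof of this corollary---it is stated as an immediate consequence of Lemma~\ref{lem:lift}---and your reconstruction (apply Lemma~\ref{lem:lift} to get $\chi=0$, then use $h^0=h^2=0$ to force $h^1=0$, with $h^{>0}(\cO_X)=0$ from fact~(3) ensuring each $L_i$ is itself exceptional) is exactly what is meant.

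You are also right about the typo: the repeated ``$H^0(X,L_i\otimes L_j\inv)=H^0(X,L_i\otimes L_j\inv)=0$'' is meant to read $H^0=H^2=0$. This is confirmed by the sentence immediately following the corollary in the paper, where the authors say they will check ``$h^0(D)=0$ and $h^2(D)=h^0(K_X-D)=0$'' for the differences $D$. So your reading~(a) is the intended one, and your option~(b) can be discarded.
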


For our example, we start with the exceptional collection on $Y$ of
the previous section and lift it to an exceptional collection on $X$
by checking that for the corresponding differences $\cO(D)=L_i\otimes
L_j\inv$ one has $h^0(D)=0$ and $h^2(D) = h^0(K_X-D)=0$. Note that for
all the differences $D$ involved, both $D$ and $K_X-D$a are of the
form (effective) + (torsion). Their images in $\Pic Y=\Pic X/\Tors$
\emph{are} effective.  However, since the torsion group of $\Pic X$ is
so large, it is possible that $D$ and $K_X-D$ themselves are not
effective for a wise choice of the lifts $L_i$.

\begin{theorem}\label{thm:6-bundles}
  The sequences of line bundles $\langle L_1,L_2,L_3,L_4,L_5,L_6
  \rangle$ and \linebreak $\langle L_1,L_2,L_3,L_4,L_5,L_6' \rangle$
  given in Table~\ref{tab:exc-coll} form exceptional sequences on~$X$
  that split into three blocks of sizes $2+3+1$, and the sheaves in
  the same block are mutually orthogonal.
\end{theorem}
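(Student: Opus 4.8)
The plan is to verify the exceptional-collection axioms directly, using the machinery already assembled. By Corollary~\ref{cor:num_exc}, if the classes $\bar L_i \in \Pic Y$ underlying the $L_i$ (up to reordering and the numerical identification of Lemma~\ref{lem:lattices}) come from an exceptional collection on $Y$ such as the Karpov--Nogin collection $\Sigma$ of \eqref{col:del}, then the numerical conditions $\chi(X, L_i \otimes L_j^{-1}) = 0$ for $i > j$ hold automatically, and likewise within a block one gets $\chi = 0$ in both directions. So the only thing left to check is the vanishing of the actual cohomology groups: for each relevant difference $\cO_X(D) = L_i \otimes L_j^{-1}$, one must show $H^0(X, D) = 0$ and $H^2(X, D) = H^0(X, K_X - D) = 0$ (the latter by Serre duality, using that $K_X$ is represented by the class in Table~\ref{tab:generators}); then $\chi(D) = 0$ forces $H^1(X, D) = 0$ as well, and semiorthogonality follows. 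For the block (orthogonality) statement one checks the same vanishings for the differences \emph{both ways} inside each of the three blocks.

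The concrete step is therefore: for every pair $i \ne j$ inside a block, and every pair $i > j$ across blocks, compute the class $D = L_i - L_j \in \Pic X$ in the coordinates of Theorem~\ref{thm:PicX}, i.e.\ as $(d(D), D|_{A_0}, D|_{B_0}, D|_{C_0})$, and similarly $K_X - D$. I would argue $H^0(X, D) = 0$ as follows. Since $\Pic X \to \Pic Y$ sends $D$ to $\bar D$, and $\bar D$ is (by construction) either anti-effective or has $h^0(Y,\bar D)$ small, the only sections of $\cO_X(D)$ that could exist are "new" ones created by the torsion twist. If $\bar D$ is not effective on $Y$, then any effective divisor in $|D|$ on $X$ must be supported on the $\pi$-exceptional locus, i.e.\ be a combination of the twelve curves $A_i, B_i, C_i$; one then intersects a hypothetical section with the elliptic curves $A_0, B_0, C_0$ and uses the explicit restriction data in Table~\ref{tab:generators} together with Lemma~\ref{lem:pardini} (which pins down $\cO_{A_0}(A_0)$, etc.) to derive a contradiction — typically the restriction $D|_{A_0}$ is a nonzero $2$-torsion class or a negative-degree class, so $H^0(A_0, D|_{A_0}) = 0$, and restriction to that curve already kills the section. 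The same test applied to $K_X - D$ handles $H^2$.

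The main obstacle, and the only genuinely delicate part, is producing the \emph{right} lifts $L_i \in \Pic X$ of the $\bar L_i$: as the paragraph before the theorem emphasizes, the torsion subgroup $\bZ_2^6$ is large, so a careless lift can make $D$ or $K_X - D$ effective on $X$ even when $\bar D$ is not effective on $Y$, and then the Hom-vanishing fails. So the real content is the specific choice encoded in Table~\ref{tab:exc-coll}, and the proof is the bookkeeping that shows this particular choice threads the needle for all $\binom{6}{2}$-ish relevant differences (plus the two extra ones coming from $L_6'$). I would organize this as a finite case check: list the differences, reduce each modulo torsion to read off $h^i$ on $Y$, and for the potentially-dangerous ones restrict to the three elliptic curves and invoke Theorem~\ref{thm:PicX}(2) and Lemma~\ref{lem:pardini} to conclude the restriction — hence $H^0(X,\cdot)$ — vanishes. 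Because the collection is claimed to split into blocks $2+3+1$ with in-block orthogonality, I expect many of these differences to restrict to nonzero $2$-torsion classes on at least one elliptic curve, which is exactly what makes the verification uniform and short.
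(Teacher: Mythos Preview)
Your overall strategy matches the paper's: reduce via Corollary~\ref{cor:num_exc} to showing $h^0(D)=h^0(K_X-D)=0$ for each relevant difference, then exploit restrictions to the six elliptic curves. But two points in your write-up are wrong or incomplete, and one of them is a genuine gap.

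First, a correction of mechanism. If $D|_{A_0}$ is a nonzero $2$-torsion class, this does \emph{not} ``kill the section'': it only forces any effective $D$ to contain $A_0$, i.e.\ $A_0$ lies in the base locus and you must then treat $D-A_0$. The paper runs exactly this peeling process, iterating until the residual divisor has $DK_X<0$ (or $DK_X=0$ with $D\ne 0$). Also, your sentence about ``the $\pi$-exceptional locus'' is off: $\pi$ is a finite $\bZ_2^2$-cover, there is no exceptional locus, and for most of the differences at hand (those lifting $f_i$ or $h_j$) the class $\bar D$ \emph{is} effective on $Y$, so that branch of your argument does not even apply.

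Second, and this is the real gap: your test fails for $D=R_2$ (and by symmetry $R_1$, as well as $K_X-(R_1-R_2)$ and $K_X-(R_2-R_6')$, which reduce to these). From Table~\ref{tab:exc-coll}, $R_2$ restricts to a degree-$1$ class on each of $A_0,B_0,C_0$ and to the trivial degree-$0$ class on each of $A_3,B_3,C_3$; in every case $h^0$ of the restriction is~$1$, so no elliptic curve is forced into the base locus and the peeling procedure never starts. The paper handles this with a separate argument (Lemma~\ref{lem:not-eff}): one observes that $R_2\cdot A_0=1$ with restriction exactly the point $A_0\cap C_3$, so any effective member must contain either $A_0$ or $C_3$; repeating at the other two ``corners'' $B_0\cap A_3$ and $C_0\cap B_3$ forces the member to be a sum of three of the six elliptic curves, which is then excluded by a direct comparison of classes. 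Your proposal does not anticipate this case, and without it the verification is incomplete.
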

\begin{table}[htbp!]
  \centering
  \begin{tabular}[h]{|c|r||r|r|r||r|r|r|}
    \hline
      & $d$ & $a_0\ $ & $b_0\ $ & $c_0\ $ & $a_3\ $ & $b_3\ $ & $c_3\ $ \\
    \hline
    $L_1$& 3& 0 00& 0 00& 0 00& 1 10& 1 10& 1 10\\
    $L_2$& 3& 1 10& 1 10& 1 10& 0 00& 0 00& 0 00\\
    \hline
    $L_3$& 2& 1 11& 0 01& 0 00& 1 11& 0 01& 0 00\\
    $L_4$& 2& 0 01& 0 00& 1 11& 0 01& 0 00& 1 11\\
    $L_5$& 2& 0 00& 1 11& 0 01& 0 00& 1 11& 0 01\\
    \hline
    $L_6$& 0& 0 00& 0 00& 0 00& 0 00& 0 00& 0 00\\
    \hline
    $L_6'$& 0& 0 10& 0 10& 0 10& 0 10& 0 10& 0 10\\
    \hline
  \end{tabular}
  \medskip
  \caption{Two exceptional collections on a Burniat surface}
  \label{tab:exc-coll}
\end{table}

\begin{remark}
  In terms of the generators, these line bundles can be written as
  follows. We put $L_i=\cO_X(R_i)$ for some divisors $R_i$, and list
  $R_i$.
  $$
  \begin{array}{llll}
    R_1&=&  A_3 +B_0+C_0+A_1-A_2, \\
    R_2&=&  A_0 + B_3+ C_3+A_2-A_1, \\
    R_3&=&  C_2+A_2-C_0-A_3  \ = \   C_1+A_1-C_3-A_0,  \\
    R_4&=&   B_2+C_2-B_0-C_3\ = \   B_1+C_1-B_3-C_0, \\
    R_5&=&  A_2+B_2-A_0-B_3 \ = \  A_1+B_1-A_3-B_0, \\
    R_6&=& 0,\\
    R_6'&=& A_0+B_0+C_0+A_3+B_3+C_3 - K_X \ =\  -R_6'.

  \end{array}
  $$
  One also has
  \begin{displaymath}
    R_1+R_2 \ = \  R_3 + R_4 + R_5 \ = \
    A_0+B_0+C_0+A_3+B_3+C_3
  \end{displaymath}

\end{remark}

\begin{proof}
  We should check that for every difference $D=R_j-R_i$, $i<j$, and
  also for all the differences in the same block one has
  $h^0(D)=h^0(K_X-D)=0$. The task is made easier by the
  $\bZ_3\times\bZ_2=\bZ_6$ symmetry group of this collection and of
  the Burniat configuration.  Almost all the cases are handled by the
  following two elementary considerations:
  \begin{enumerate}
  \item If $DK<0$ or $DK=0$ but $D\ne 0$ then $D$ is not effective.
  \item If for one of the elliptic curves $E=A_0, \dotsc, C_3$ one has
    $DE=0$ and $D|_E\ne 0$ in $E[2]$ then $E$ is in the base locus of
    the linear system $|D|$. Thus, if $D$ is effective then so is
    $D-E$.
  \end{enumerate}
  The remaining cases are handled by Lemma~\ref{lem:not-eff}. We now
  go through the computation.

  \underline{$R_5-R_6$} has $C_0,C_3$ in base
  locus. $(R_5-R_6)-C_0-C_3$ has degree 0 but is not zero itself in
  $\Pic X$. Done.

  Below, we will abbreviate this sequence of arguments as follows:
  ``$\underline{R_5-R_6} \to D-C_0C_3$. $DK=0$ but $D\ne0$.\done'' The
  symbol $D$ will stand for the divisor discussed immediately before,
  e.g. for the above example we first have $D=R_5-R_6$ and then
  $D=R_5-R_6-C_0-C_3$.

  \medskip\noindent \nounderline{$K-(R_5-R_6)$} $\to D-B_0B_3$ $\to
  D-C_0C_3$. $DK=0$ but $D\ne0$. \done

  \smallskip\noindent $R_4-R_6$, $R_3-R_6$ differ from $R_5-R_6$ by a
   $\bZ_3$ symmetry.

  \medskip\noindent \underline{$R_4-R_5$}. $DK=0$ but $D\ne0$.

  \smallskip\noindent \nounderline{$K-(R_4-R_5)$} $\to D-C_0C_3$ $\to
  D-A_0A_3$ $\to D-B_0B_3C_0C_3$. $DK<0$. \done

  \smallskip\noindent \underline{$R_5-R_4$}. $DK=0$ but $D\ne0$.

  \smallskip\noindent \nounderline{$K-(R_5-R_4)$} $\to D-B_0B_3$ $\to
  D-A_0A_3$ $\to D-B_0B_3$. $DK=0$ but $D\ne 0$.\done

  \smallskip\noindent Other $R_i-R_j$, $i,j\in\{3,4,5\}$ are done by
  symmetry.

  \medskip\noindent \underline{$R_2-R_6$} is done by
  Lemma~\ref{lem:not-eff}. \done $R_1-R_6$ differs from it by a
  $\bZ_2$ symmetry.

  \smallskip\noindent \nounderline{$K-(R_2-R_6)$} $\to
  D-A_0B_0C_0$. $DK=0$ but $D\ne0$. \done

  \medskip\noindent \underline{$R_2-R_5$} $\to
  D-B_0C_3$. $DK<0$. \done

  \smallskip\noindent \nounderline{$K-(R_2-R_5)$} $\to D-A_0C_0$ $\to
  D-A_3B_3C_3$. $DK=0$ but $D\ne 0$.\done

  \smallskip\noindent $R_2-R_4$, $R_2-R_3$ differ from $R_2-R_5$ by a
   $\bZ_3$ symmetry.

  \noindent \nounderline{$R_1-R_i$}, $i\in\{4,5,6\}$ differ
  from $R_2-R_i$ by a $\bZ_2$ symmetry.

  \medskip\noindent \underline{$R_1-R_2$}. $DK=0$ but $D\ne0$.

  \medskip\noindent \nounderline{$K-(R_1-R_2)$} $\to D-A_3B_3C_3$ $\to
  D=R_1$. Done by Lemma~\ref{lem:not-eff}.

  \smallskip\noindent $R_2-R_1$ is symmetric to $R_1-R_2$.

  \medskip The differences involving $R_6'$, up to symmetry:

  \noindent
  \underline{$R_5-R_6'$} $\to D-A_0C_0A_3C_3$. $DK<0$. \done

  \smallskip\noindent $K-(R_5-R_6')$ $\to D-B_0B_3$ $\to
  D-A_0C_0A_3C_3$. $DK<0$. \done

  \smallskip\noindent \underline{$R_2-R_6'$} $\to D-A_3B_3C_3$. $DK=0$
  but $D\ne0$. \done

  \smallskip\noindent $K-(R_2-R_6')$ $\to D=R_2$. Done by
  Lemma~\ref{lem:not-eff}.

  This concludes the proof of the theorem.
\end{proof}

\begin{lemma}\label{lem:not-eff}
  The divisor $R_2$ is not effective.
\end{lemma}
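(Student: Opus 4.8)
The plan is to assume $R_2$ is effective, fix an effective divisor $\Delta$ with $\cO_X(\Delta)\cong\cO_X(R_2)$, and reach a contradiction by pinning $\Delta$ down exactly. The key relation is $2R_2=\pi^*h$ in $\Pic X$, where $h:=\bar A_0+\bar B_3+\bar C_3$; note $h=2\bar H-\bar A_0-\bar B_0-\bar C_0$, the hyperplane class of the other contraction $Y\to\bP^2$. Indeed, writing $R_2=A_0+B_3+C_3+(A_2-A_1)$ and using that $A_2-A_1$ is $2$-torsion together with the ramification identities $\pi^*\bar A_0=2A_0$, $\pi^*\bar B_3=2B_3$, $\pi^*\bar C_3=2C_3$, one gets $2R_2=2(A_0+B_3+C_3)=\pi^*h$. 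First I would note that the classes $h-L_i$ ($i=1,2,3$, with $L_1=3\bar H-2\bar A_0-\bar C_0$ and $L_2,L_3$ obtained cyclically from the defining relations of the $\bZ_2^2$-cover) all have negative intersection with the nef class $\bar H$, so $H^0(Y,\cO_Y(h-L_i))=0$. The projection formula $\pi_*\cO_X(\pi^*h)=\cO_Y(h)\oplus\bigoplus_{i=1}^3\cO_Y(h-L_i)$ then gives $H^0(X,\cO_X(\pi^*h))=\pi^*H^0(Y,\cO_Y(h))$, i.e.\ every member of $|\pi^*h|$ is the preimage $\pi^*\bar M$ of a member $\bar M\in|h|$.

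Applying this to $2\Delta$ (which lies in $|\pi^*h|$ since $\cO_X(2\Delta)\cong\cO_X(2R_2)=\cO_X(\pi^*h)$) yields $2\Delta=\pi^*\bar M$ for some $\bar M\in|h|$. Now $\pi$ is ramified with index $2$ precisely over the twelve branch curves $\bar A_i,\bar B_i,\bar C_i$, so $\pi^*\bar M'$ is a reduced divisor for any irreducible $\bar M'$ not among these; since every member of $|h|$ is reduced, this forces every component of $\bar M$ to be a branch curve — an unramified component would contribute multiplicity $1$ to $2\Delta=\pi^*\bar M$. Hence $\bar M$ is a sum of distinct branch curves with $\sum[\bar M_j]=h$, and a short combinatorial check (the $\bar H$-coefficient forces exactly two curves from $\{\bar A_3,\bar B_3,\bar C_3,\bar A_1,\bar A_2,\bar B_1,\bar B_2,\bar C_1,\bar C_2\}$, after which the $\bar A_0,\bar B_0,\bar C_0$-coefficients pin down the rest) leaves exactly nine possibilities, so that $\Delta=\frac12\pi^*\bar M$ must be one of
\[
A_0{+}B_3{+}C_3,\ \ B_0{+}C_3{+}A_3,\ \ C_0{+}A_3{+}B_3,\ \ A_3{+}C_1,\ \ A_3{+}C_2,\ \ B_3{+}A_1,\ \ B_3{+}A_2,\ \ C_3{+}B_1,\ \ C_3{+}B_2.
\]

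To finish I would read off the $\Pic X$-class of each of these nine divisors from Table~\ref{tab:generators} and compare it with the row for $L_2=\cO_X(R_2)$ in Table~\ref{tab:exc-coll}: in each case the classes differ, already in one $\bZ_2$-coordinate of the $a_0$-, $b_0$- or $c_0$-column (e.g.\ $A_0+B_3+C_3$ has $b_0=1\,00$ and $A_3+B_3+C_0$ has $a_0=1\,00$, versus $1\,10$ for $R_2$); the $\bZ_3\times\bZ_2$ symmetry of the configuration reduces these to two or three essentially distinct checks. This contradicts $\cO_X(\Delta)\cong\cO_X(R_2)$, so no such $\Delta$ exists.

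The step I expect to be the real obstacle is passing from ``$2\Delta\in|\pi^*h|$'' to the nine-element list: it relies on the vanishing $H^0(Y,h-L_i)=0$ (the only place the precise cover data $(L_1,L_2,L_3)$ of \cite{Pardini_AbelianCovers} is needed) and on the ramification bookkeeping that no non-branch component of $\bar M$ can be absorbed into the doubled divisor. Everything after that is a table lookup; everything before it is formal.
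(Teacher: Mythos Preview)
Your argument is correct, and it takes a genuinely different route from the paper's. The paper never invokes the cover $\pi\colon X\to Y$: it works intrinsically on $X$ via the six elliptic curves. For each of the three ``corners'' $A_0\cap C_3$, $B_0\cap A_3$, $C_0\cap B_3$ it observes that $R_2\cdot A_0=1$ with $R_2|_{A_0}=\cO_{A_0}(A_0\cap C_3)$ while $R_2\cdot C_3=0$, so any $D\in|R_2|$ must contain $A_0$ or $C_3$; running this at all three corners forces $D$ to contain at least three of the six elliptic curves, hence by degree exactly three, and a glance at the $2$-torsion coordinates leaves only $D=A_3+B_3+C_3$, which is ruled out because $(A_3+B_3+C_3)\cdot A_3=-1\ne 0=R_2\cdot A_3$.

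Your approach instead exploits the relation $2R_2=\pi^*h_2$ and the decomposition $\pi_*\cO_X=\cO_Y\oplus\bigoplus L_i^{-1}$ to identify $|\pi^*h_2|$ with $\pi^*|h_2|$, then uses reducedness of members of $|h_2|$ and the ramification pattern to force $\bar M$ to be a sum of branch curves. This is a clean and systematic reduction; the price is that you must compute the cover data $L_i=3\bar H-2\bar A_0-\bar C_0$, etc., which the paper never needs. The paper's argument is shorter and stays entirely inside Table~\ref{tab:generators}; yours is more structural and would transplant verbatim to any other class $R$ with $2R=\pi^*(\text{line})$. Both end in essentially the same finite check against Table~\ref{tab:exc-coll}, though the paper narrows the list to a single candidate before looking at torsion, whereas you carry nine candidates to the table.
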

\begin{proof}
  Consider the ``corner'' $A_0\cap C_3$ of the hexagon in
  Figure~\ref{fig:Burniat}. We claim that any effective divisor $D\in
  |R_2|$ contains either $A_0$ or $C_3$. Indeed, $R_2 A_0 =1$. So
  either $A_0 \le D$ or $D$ intersects $A_0$ at a unique point giving
  $(1\ 10)$ in $\Pic A_0$ in our coordinates, which is precisely
  $A_0\cap C_3$. But since $R_2 C_3 =0$, $D$ must contain $C_3$.

  The same argument applies to the ``corners'' $B_0\cap A_3$ and
  $C_0\cap B_3$.  Each of the six curves $A_0, \dotsc, C_3$ passes
  through only one of them.  Thus, $D$ must contain at least three of
  the curves $A_0,\dotsc,C_3$. For degree reasons, $D$ must be equal
  to the sum of exactly three of them.  By focusing on the coordinates
  in $A_0[2]$, $B_0[2]$, $C_0[2]$, it follows that one must have
  $D=A_3+B_3+C_3$. But $L_2 A_3 =0$ and $(A_3+B_3+C_3)A_3
  =-1$. Contradiction.
\end{proof}

Consider the exceptional collection $\Upsilon=(L_1, \ldots,
L_6\cong\cO_X)$ and denote by $\cD$ the full triangulated subcategory
of $\mathbf{D}^b(\operatorname{coh}(X))$ generated by this collection.
The subcategory $\cD$ is admissible, i.e. the embedding functor $j:\cD
\to \mathbf{D}^b(\operatorname{coh}(X))$ has right and left adjoint
functors (see \cite{BondalKapranov_Serre_functor}).

Let us calculate the DG algebra of endomorphisms
$\cB_{\Upsilon}=\mathbf{R}\Hom(T, T)$, where $T=\oplus_{i=1}^6 L_i.$
First, we should find all nontrivial Hom's and Ext's between line
bundles $L_i$ and $L_j$ in our collection.

\begin{lemma}\label{lem:hom} For the exceptional collection
  $\Upsilon=(L_1, \ldots, L_6)$ we have
$$
\Hom(L_i, L_j)=0\quad\text{for all}\quad i\ne j.
$$
The same holds for the exceptional collection $\Upsilon'=(L_1, \ldots, L_6')$.
\end{lemma}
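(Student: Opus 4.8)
The plan is to verify the vanishing of $\Hom(L_i, L_j) = H^0(X, L_j \otimes L_i^{-1})$ for every ordered pair $i \ne j$ by showing that each difference divisor $D = R_j - R_i$ is non-effective. Since Theorem~\ref{thm:6-bundles} already establishes that $\Upsilon$ is an exceptional collection, we know $\chi(L_j \otimes L_i^{-1}) = 0$ for $j > i$ and $H^0 = H^1 = H^2 = 0$ for those differences; so the only genuinely new content is the pairs with $j < i$, together with the off-diagonal pairs \emph{within} each block. But the within-block pairs were \emph{also} handled in the proof of Theorem~\ref{thm:6-bundles} (that proof checks $h^0(D) = h^0(K-D) = 0$ for all differences inside a block, not just for $j > i$). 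So in fact the real task reduces to the pairs $j < i$ across different blocks: $R_i - R_j$ for $i > j$ not already covered.

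First I would invoke the $\bZ_3 \times \bZ_2 = \bZ_6$ symmetry of the collection and of the Burniat configuration to cut the number of cases down to a handful of representatives, exactly as in the proof of Theorem~\ref{thm:6-bundles}. Then, for each representative difference $D = R_i - R_j$ with $i > j$, I would run the same two elementary reductions used there: (1) if $DK < 0$, or $DK = 0$ with $D \ne 0$, then $D$ is not effective; (2) if $DE = 0$ and $D|_E \ne 0$ in $E[2]$ for one of the six elliptic curves $E \in \{A_0, \dots, C_3\}$, then $E$ lies in the base locus of $|D|$, so effectivity of $D$ forces effectivity of $D - E$, and we iterate. When these reductions bottom out at a translate of $R_2$ (or $-R_2$, or $R_1$ by symmetry), I would appeal to Lemma~\ref{lem:not-eff}, which says $R_2$ is not effective. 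The arithmetic of each step is a one-line computation in the symmetric coordinates $(d; a_0, b_0, c_0, a_3, b_3, c_3)$ of Table~\ref{tab:exc-coll}, using the intersection data encoded in Table~\ref{tab:generators}.

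For the collection $\Upsilon' = (L_1, \dots, L_5, L_6')$ I would observe that $L_6'$ differs from $L_6 = \cO_X$ only by the $2$-torsion class with all coordinates $0\,10$, and that $R_6' = -R_6'$. Hence each difference $R_j - R_6'$ or $R_6' - R_j$ is the corresponding difference $R_j - R_6$ or $R_6 - R_j$ twisted by a fixed torsion line bundle; the two elementary reductions (1) and (2) are insensitive to adding torsion to $D$ in case (1) since $DK$ and $D = 0$ are detected numerically modulo torsion only for $DK$ (but $D \ne 0$ must be checked with torsion — which is fine, since twisting by a nonzero torsion class can only help), and in case (2) the base-locus argument still applies because $D|_E$ is computed with torsion. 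So the $\Upsilon'$ cases reduce to those in the proof of Theorem~\ref{thm:6-bundles} involving $R_6'$, which were already carried out there, plus their transposes $R_6' - R_j$, handled by the same reductions terminating at Lemma~\ref{lem:not-eff} applied to $R_2$.

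The main obstacle is not any single hard step but bookkeeping: making sure that every one of the (reduced set of) off-diagonal pairs, in \emph{both} orders and for \emph{both} collections, genuinely terminates at a non-effective divisor, and that no case secretly requires a divisor to be effective. I expect the borderline cases to be those where the first reduction repeatedly yields $DK = 0$ with $D$ a nonzero torsion class — one has to be careful that $D$ really is nonzero in $\Pic X$, not just in $\Pic X / \Tors$, which is where the explicit torsion coordinates in Table~\ref{tab:exc-coll} and the identification $\Tors(\Pic X) \cong A_0[2] \oplus B_0[2] \oplus C_0[2]$ from Theorem~\ref{thm:PicX} do the work. Once all differences are shown non-effective, $\Hom(L_i, L_j) = 0$ for $i \ne j$ follows immediately, and the identical argument with torsion twists gives the claim for $\Upsilon'$.
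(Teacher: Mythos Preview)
You have the direction of semiorthogonality inverted. Exceptionality of $(L_1,\ldots,L_6)$ means $\Hom(L_j,L_i[m])=0$ for $j>i$, so what is already known is $\Hom(L_i,L_j)=0$ for $i>j$, not for $j>i$ as you write. Combined with the mutual orthogonality inside each block, the genuinely new cases are $\Hom(L_i,L_j)$ with $i<j$ in \emph{different} blocks---precisely the opposite of the cases you set out to check. Taken literally, your plan would re-derive what Theorem~\ref{thm:6-bundles} already gives and leave the Lemma unproved.

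Once the correct cases are identified, none of the machinery you propose is needed. The $K$-degree $d=L\cdot K_X$ strictly decreases across the three blocks ($d=3,\,2,\,0$ in Table~\ref{tab:exc-coll}), so for $i<j$ in different blocks one has $(R_j-R_i)\cdot K_X=d_j-d_i<0$; since $K_X$ is ample, $R_j-R_i$ cannot be effective and $\Hom(L_i,L_j)=H^0(X,\cO(R_j-R_i))=0$. This is your own reduction~(1), applied once, with no base-locus chase, no iteration, and no appeal to Lemma~\ref{lem:not-eff}. The same one-line observation handles $L_6'$, which also has $d=0$. That is the paper's entire proof.
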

\begin{proof}
  Since the collection $\Upsilon$ is exceptional, it remains to check
  only the case when $i<j$ and $L_i, L_j$ are from different blocks.
  But in this case $\Hom(L_i, L_j)\cong H^0(X, \cO(R_j-R_i))=0,$
  because $(R_j-R_i)K<0.$
\end{proof}

\begin{lemma}[\cite{MendezPardini} Lemma 5.6(3)]\label{lem:Pardini}
  If $h^1(X, \cO(\tau))> 0$ for a torsion divisor $\tau\in \Tors(\Pic
  X)$ then the restrictions of $\tau$ on $A_0, B_0,$ and $C_0$ are
  trivial for two of them and is equal to $(10)$ for the third. Hence,
  for all other nonzero torsion divisors $\alpha$ we have $h^1(X,
  \cO(\alpha))=0$ and $h^2(X,
  \cO(\alpha))=\chi(\cO(\alpha))=\chi(\cO)=1.$
\end{lemma}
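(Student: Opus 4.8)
The plan is to reduce to a finite counting problem and then solve it with two tools pulling in opposite directions: Inoue's étale $\bZ_2^3$-cover bounds the number of ``bad'' torsion classes from above, and the multiple-fibre structure of the pencils $|f_1|,|f_2|,|f_3|$ pulled back to $X$ produces a matching family from below.

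\emph{Reduction.} For $\tau\in\Tors(\Pic X)$ one has $\tau^2=\tau\cdot K_X=0$, so Riemann--Roch gives $\chi(\cO_X(\tau))=\chi(\cO_X)=1$; for $\tau\ne 0$ moreover $h^0(\cO_X(\tau))=0$, hence by Serre duality $h^1(\cO_X(\tau))=h^2(\cO_X(\tau))-1=h^0(\cO_X(K_X-\tau))-1\ge 0$. Thus everything reduces to describing the set $\mathcal S:=\{\,0\ne\tau\in\Tors(\Pic X):h^1(\cO_X(\tau))>0\,\}$; once $\mathcal S$ is known the final ``Hence'' sentence is immediate, since every nonzero torsion $\alpha\notin\mathcal S$ then has $h^0(\cO_X(\alpha))=h^1(\cO_X(\alpha))=0$, so $h^2(\cO_X(\alpha))=\chi(\cO_X(\alpha))=\chi(\cO_X)=1$. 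I claim $\mathcal S=\{\,\cO_X(A_1-A_2),\,\cO_X(B_1-B_2),\,\cO_X(C_1-C_2)\,\}$; reducing Table~\ref{tab:generators} modulo~$2$ shows that these three classes restrict to the three permutations of $\bigl((10),0,0\bigr)$ on $(A_0,B_0,C_0)$, which is exactly the asserted pattern.

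\emph{Upper bound $|\mathcal S|\le 3$.} Let $u\colon Z\to X$ be Inoue's étale $\bZ_2^3$-cover, with $Z\subset A:=E_1\times E_2\times E_3$ a divisor of type $(2,2,2)$, and put $\Gamma:=\ker\bigl(u^*\colon\Tors(\Pic X)\to\Pic Z\bigr)$; this is the group of eigen-line-bundles of $u_*\cO_Z$, so $\Gamma\cong\bZ_2^3$ and $u_*\cO_Z=\bigoplus_{\gamma\in\Gamma}\cO_X(\gamma)$. If $\tau\notin\Gamma$, then $u^*\cO_X(\tau)$ is a nontrivial torsion element of $\Pic^0 Z$, which by the Lefschetz theorem equals $\mathcal P|_Z$ for a nontrivial $\mathcal P\in\Pic^0 A$. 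From $0\to\mathcal P(-Z)\to\mathcal P\to\mathcal P|_Z\to 0$ and the Künneth formula on $A$ one gets $H^1(Z,\mathcal P|_Z)=0$: indeed $H^\bullet(A,\mathcal P)=0$ because $\mathcal P$ is a nontrivial degree-zero bundle, while $\mathcal P(-Z)$ is an exterior product over the $E_i$ of degree-$(-2)$ bundles twisted by degree-zero bundles, hence has cohomology only in degree~$3$. Since $\cO_X(\tau)$ is a direct summand of $u_*u^*\cO_X(\tau)$, this forces $h^1(\cO_X(\tau))=0$, so $\mathcal S\subseteq\Gamma\setminus\{0\}$. Finally $\sum_{\gamma\in\Gamma}h^1(\cO_X(\gamma))=h^1(Z,\cO_Z)=q(Z)=q(A)=3$ (Lefschetz again) and $h^1(\cO_X)=q(X)=0$, so $\sum_{0\ne\gamma\in\Gamma}h^1(\cO_X(\gamma))=3$ and at most three of these summands can be positive. (With a little more work one even sees $|\mathcal S|=3$: $\bZ_2^3$ acts on $H^0(\Omega^1_Z)\cong\langle dz_1,dz_2,dz_3\rangle$ with the $i$-th generator negating $dz_i$ and fixing $dz_j$ for $j\ne i$, so $H^1(\cO_Z)$ is a sum of three distinct characters each with multiplicity one, whence exactly three classes in $\Gamma$ are bad, each with $h^1=1$.)

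\emph{A matching family from below.} Consider the fibration $g\colon X\to\bP^1$ given by the pencil $|\,2A_1\,|=|\,\pi^*\cO_Y(f_1)\,|$; here $h^0(\cO_X(2A_1))=h^0(\cO_Y(f_1))=2$ (the other summands of $\pi_*\cO_X$ contribute nothing), so $g=\bar g\circ\pi$ for the pencil map $\bar g\colon Y\to\bP^1$ of $|f_1|$. Its general fibre is the smooth connected genus-$3$ curve $\pi^{-1}(\bar A_t)$, and listing the members of $|f_1|$ supported on $\bar A+\bar B+\bar C$ shows that $g$ has exactly four multiple fibres, all of multiplicity~$2$: $2A_1$, $2A_2$, $2(A_0+C_3)$, $2(C_0+A_3)$, over four distinct points of $\bP^1$. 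The $2$-torsion class $\tau_1:=\cO_X(A_1-A_2)$ restricts trivially to the general fibre (as $A_1$ and $A_2$ are disjoint from $\pi^{-1}(\bar A_t)$) but nontrivially to the reduced multiple fibres --- e.g.\ $\tau_1|_{A_1}=\cO_{A_1}(A_1)=N_{A_1/X}$, which has exact order $2$ because $2A_1$ is a fibre. Hence the étale double cover $\widetilde X\to X$ defined by $\tau_1$, composed with $g$, Stein-factors through a double cover $B\to\bP^1$ branched precisely at those four points, so $B$ is elliptic; therefore $q(\widetilde X)\ge g(B)=1$ and $h^1(\cO_X(\tau_1))=q(\widetilde X)-q(X)\ge 1$, i.e.\ $A_1-A_2\in\mathcal S$. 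Running the same argument with $|f_1|$ replaced by $|f_2|$ and $|f_3|$ gives $B_1-B_2,\,C_1-C_2\in\mathcal S$ as well. These three classes are distinct, so by the upper bound $\mathcal S$ is exactly this triple, and the Lemma follows.

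\emph{Main obstacle.} The Künneth vanishing and the input from Inoue's construction ($\Pic^0 Z\cong\Pic^0 A$, $q(Z)=3$, the $\bZ_2^3$-action on $1$-forms) are routine, as is the Riemann--Roch bookkeeping. The real work is in the last paragraph: showing that the double cover $\widetilde X\to X$ attached to $A_1-A_2$ is, after composing with $g$, branched over all four multiple-fibre points and no others --- a careful application of the standard theory of multiple fibres of surface fibrations --- together with the elementary but fiddly verification that $g$ has exactly those four multiple fibres. (All dimensions of $\cO_X(\tau)$-cohomology used here are the expected ones over $\bC$, which by the discussion on page~\pageref{moduli-discussion} is the essential case.)
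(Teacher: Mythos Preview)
The paper does not prove this lemma; it is quoted verbatim from \cite{MendezPardini}, so there is no ``paper's own proof'' to compare against. Your argument must therefore stand on its own, and the upper-bound half of it does: the Inoue cover $u\colon Z\to X$, the K\"unneth vanishing for nontrivial $\mathcal P\in\Pic^0 A$, and the count $\sum_{\gamma\in\Gamma}h^1(\gamma)=q(Z)=3$ are all correct and give $\mathcal S\subset\Gamma$ with $|\mathcal S|\le 3$. The representation-theoretic sharpening to $|\mathcal S|=3$ is also fine.

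The lower-bound paragraph, however, has a genuine error. You claim that the Stein factorisation of $\widetilde g=g\circ u$ is branched at all four multiple-fibre points of $g=g_{|f_1|}$, but this is false for $\tau_1=A_1-A_2$. Near the fibres $2(A_0+C_3)$ and $2(C_0+A_3)$ the divisors $A_1,A_2$ are disjoint from a tubular neighbourhood, so $\cO_X(A_1-A_2)$ is \emph{trivial} there and the \'etale double cover $\widetilde X\to X$ splits locally into two sheets; equivalently, Table~\ref{tab:generators} gives $(A_1-A_2)|_{A_0}=(A_1-A_2)|_{C_3}=0$, hence $\tau_1|_{A_0+C_3}$ is trivial (the two components meet in a single point, so $\Pic(A_0\cup C_3)\cong\Pic A_0\times\Pic C_3$). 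Thus $B\to\bP^1$ is branched only at $g(A_1)$ and $g(A_2)$, so $B\cong\bP^1$, $g(B)=0$, and your inequality $q(\widetilde X)\ge g(B)$ yields nothing. The ``e.g.'' verification for $\tau_1|_{A_1}$ does not extend to the other two multiple fibres.

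What is missing is the identification of the three distinguished characters $\chi_1,\chi_2,\chi_3\in\Gamma$ with the three classes whose $(A_0,B_0,C_0)$-restriction pattern is a permutation of $\bigl((10),0,0\bigr)$. One route is to carry out that identification directly through Inoue's construction. Another is to salvage your fibration idea, but with the \emph{correct} pencil: using Table~\ref{tab:generators} one checks that $A_1-A_2\equiv A_0+A_3+B_0+B_3-B_1-B_2$ in $\Pic X$, which is the ``all four parities odd'' vertical class for the fibration $|2B_1|=|\pi^*f_2|$; for that fibration the Stein factorisation really is branched at four points and $B$ is elliptic. Either fix closes the gap, but as written the proof is incomplete.
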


\begin{lemma}\label{lem:ext} For the exceptional collection
  $\Upsilon=(L_1, \ldots, L_6)$ we have
$$
\Ext^1(L_i, L_j)=0\quad\text{for all}\quad i, j.
$$
The same results hold after replacing $L_6$ by $L_6'$.
\end{lemma}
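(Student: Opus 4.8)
The plan is to compute $\Ext^1(L_i, L_j) = H^1(X, \cO(R_j - R_i))$ for every pair $(i,j)$, reducing each to a cohomology computation on $Y$ plus a correction by torsion. First I would note that when $i = j$ there is nothing to prove since the $L_i$ are exceptional, and when $L_i, L_j$ lie in the same block Theorem~\ref{thm:6-bundles} already guarantees full orthogonality, so $H^1$ vanishes there too. Likewise, for $i < j$ with $L_i, L_j$ in different blocks, $(R_j - R_i)K < 0$, and for $i > j$ in different blocks the symmetric bound gives $(R_j - R_i)K > 0$, i.e. $(K - (R_j-R_i))K < 0$; combined with $\chi(L_i \otimes L_j\inv) = 0$ (Corollary~\ref{cor:num_exc}, or a direct Riemann--Roch via Lemma~\ref{lem:lift}) and the vanishing of $h^0$ and $h^2$ established in the proof of Theorem~\ref{thm:6-bundles}, this forces $h^1 = 0$.

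The genuinely new content is therefore the case where the difference $D = R_j - R_i$ maps to $0$ in $\Pic Y = \Pic X/\Tors$, i.e. $D$ is a nonzero torsion class. This happens exactly for the pairs within... wait, no: within a block the differences are torsion, but orthogonality there is already known. So in fact I expect the subtle pairs to be those where $D$ is torsion yet the block argument does not directly apply — which, reading the collection, does not occur for $\Upsilon$ as stated, so the real point is that all cross-block differences have $D$ non-torsion with $DK \ne 0$, and the argument of the previous paragraph closes everything. The one thing that must be checked carefully is that no cross-block difference $D$ has $DK = 0$; inspecting Table~\ref{tab:exc-coll}, the $d$-values are $3,3$ (block 1), $2,2,2$ (block 2), $0$ (block 3), so every cross-block difference has $|DK| \in \{1,2,3\}$, never $0$, and never with both $DK \le 0$ and $(K-D)K \le 0$. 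Hence the Riemann--Roch/vanishing argument applies uniformly.

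Where Lemma~\ref{lem:Pardini} enters: although the coarse argument above suffices for the cross-block differences, to be safe one should verify that no torsion twist sneaks in to make $h^1 > 0$. Concretely, if $D = R_j - R_i$ has $\bar D$ effective on $Y$ but $D$ itself or $K - D$ has a torsion correction, the vanishing of $h^0$ proved in Theorem~\ref{thm:6-bundles} already handles $h^0(D)$ and $h^0(K-D) = h^2(D)$; then $h^1(D) = h^0(D) + h^2(D) - \chi(D) = 0$. The only place where $\chi$ could fail to force this is if $D$ were torsion (where $\chi(\cO(\tau)) = 1 \ne 0$), and that is precisely the regime Lemma~\ref{lem:Pardini} controls — it tells us the only torsion classes with $h^1 > 0$ restrict nontrivially to exactly one of $A_0, B_0, C_0$, and one checks none of our differences $R_j - R_i$ is such a class. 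For the $L_6'$ variant, the differences $R_j - R_6'$ differ from $R_j - R_6$ only by the fixed torsion class $R_6'$, whose restriction to each of $A_0, B_0, C_0$ is $(10)$ — not of the shape in Lemma~\ref{lem:Pardini} — so the same conclusions hold verbatim.

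The main obstacle, such as it is, is bookkeeping rather than depth: one must run through all $36$ ordered pairs (plus the $L_6'$ replacements) and confirm in each case that either the block structure, the degree bound $DK \ne 0$ with the Riemann--Roch identity, or Lemma~\ref{lem:Pardini} applies. The $\bZ_6$ symmetry of the collection cuts this down substantially — the same reductions used in the proof of Theorem~\ref{thm:6-bundles} (the $\bZ_3$ cyclic rotation of the hexagon and the $\bZ_2$ swap $R_1 \leftrightarrow R_2$) apply here too, so in practice only a handful of representative differences need to be examined explicitly. I would present it as: "the $h^0$ and $h^2$ vanishings were shown in Theorem~\ref{thm:6-bundles}; since moreover $\chi(L_i \otimes L_j\inv) = 0$ by Corollary~\ref{cor:num_exc} whenever $R_j - R_i$ is non-torsion, and the torsion cases are excluded by Lemma~\ref{lem:Pardini}, we get $h^1 = 0$ throughout."
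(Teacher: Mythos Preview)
Your argument contains a genuine error: you have conflated the two directions of the semiorthogonality. The vanishing of $h^0$ and $h^2$ proved in Theorem~\ref{thm:6-bundles}, and the vanishing of $\chi$ in Corollary~\ref{cor:num_exc}, all concern the \emph{exceptional} direction: they show $H^m(X,\cO(R_i-R_j))=0$ for $i<j$, i.e.\ $\Ext^m(L_j,L_i)=0$ for $j>i$. The Lemma, however, asks about the \emph{forward} direction $\Ext^1(L_i,L_j)=H^1(X,\cO(R_j-R_i))$ for $i<j$. In that direction neither $h^2=0$ nor $\chi=0$ holds. For instance, take $(i,j)=(3,6)$: then $\cO(R_6-R_3)=L_3^{-1}$ has $\chi=2$ by Riemann--Roch (or by Lemma~\ref{lem:lift}, since $\chi(Y,\cO_Y(f_i))=2$), and indeed $\Ext^2(L_3,L_6)\cong\kk^2$ as recorded in Proposition~\ref{prop:ext2}. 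So the identity ``$h^1=h^0+h^2-\chi=0$'' you want to invoke simply fails: the right-hand side is $0+2-2=0$, but you have established neither the value of $h^2$ nor of $\chi$ by the results you cite. Your later paragraphs about torsion differences and Lemma~\ref{lem:Pardini} are beside the point, since none of the cross-block differences $R_j-R_i$ is torsion.

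The paper's proof proceeds quite differently. For each pair $i<j$ in distinct blocks it writes $L_j\otimes L_i^{-1}$ explicitly as $\cO_X(-E_1-\cdots-E_s+\tau)$ for suitable branch curves $E_k\in\{A_0,\ldots,C_3,C_1\}$ and a torsion class $\tau$, and then peels off the $E_k$ one at a time via short exact sequences
\[
0\longrightarrow \cO(-E_1-\cdots-E_k+\tau)\longrightarrow \cO(-E_1-\cdots-E_{k-1}+\tau)\longrightarrow \cO_{E_k}(-E_1-\cdots-E_{k-1}+\tau)\longrightarrow 0.
\]
At the top, $H^1(\cO(\tau))=0$ is supplied by Lemma~\ref{lem:Pardini} (after checking in Table~\ref{tab:exc-coll} that $\tau$ is not one of the three exceptional torsion classes), and at each step the restriction to $E_k$ has either negative degree or is a nontrivial degree-zero bundle on an elliptic curve, so $H^0$ on $E_k$ vanishes. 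This is exactly the kind of direct cohomological computation that your Euler-characteristic shortcut was meant to bypass, but cannot.
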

\begin{proof}
  Since the collection $\Upsilon$ is exceptional, it remains to check
  only the cases when $i<j$ and $L_i, L_j$ are from different blocks.

  Consider two line bundles $L_1$ and $L_3$ from the first and the
  second blocks. The line bundle $L_3\otimes L_1^{-1}$ is isomorphic
  to $\cO_{X}(-A_0+\alpha),$ where $\alpha$ is a torsion divisor.  It
  follows from Lemma \ref{lem:Pardini} and Table \ref{tab:exc-coll}
  that $h^1(X, \cO(\alpha))=0$ and the restriction $\alpha|_{A_0}$ is
  not trivial.  Therefore $h^0(A_0, \alpha|_{A_0})=0$ and from the
  short exact sequence
  \begin{displaymath}
    0\longrightarrow \cO(-A_0+\alpha)\longrightarrow
    \cO(\alpha)\longrightarrow \cO_{A_0}(\alpha)\longrightarrow 0
  \end{displaymath}
  we obtain that $H^1(X, \cO(-A_0+\alpha))=0.$ Similar considerations
  work for all line bundles from the first and the second blocks. We
  only have to replace $A_0$ by another elliptic curve $B_0, C_0, A_3,
  B_3, C_3,$ respectively.

  Now consider $L_3$ and $L_6=\cO_X.$ The line bundle $L_3^{-1}$ is
  isomorphic to $\cO_X(-B_0-C_3+\beta),$ where $\beta$ is a torsion
  divisor.  Consider the following short exact sequences
  \begin{displaymath}
    \begin{array}{l}
      0\longrightarrow \cO(-B_0+\beta)\longrightarrow
      \cO(\beta)\longrightarrow \cO_{B_0}(\beta)\longrightarrow 0\\
      0\longrightarrow \cO(-B_0-C_3+\beta)\longrightarrow
      \cO(-B_0+\beta)\longrightarrow \cO_{C_3}(-B_0+\beta)\longrightarrow 0,
    \end{array}
  \end{displaymath}
  It follows from Lemma \ref{lem:Pardini} and Table \ref{tab:exc-coll}
  that $h^1(X, \cO(\beta))=0$ and the restrictions $\beta|_{B_0}$ is
  not trivial.  Therefore $h^1(X, \cO(-B_0+\beta))=0.$ From the second
  exact sequence, noting that $\deg \cO_{C_3}(-B_0+\beta)<0$, we
  obtain $h^1(X, \cO(-B_0-C_3+\beta))=0$.  Similar considerations work
  for any line bundle $L_i, i=3,4,5$ from the second block and
  $L_6=\cO_X.$

  Finally, let us take $L_1$ and $L_6=\cO_X.$ The line bundle
  $L_1^{-1}$ is isomorphic to $\cO_X(-A_0-C_1+\gamma),$ where $\gamma$
  is a torsion divisor.  Consider the following short exact sequences
  \begin{displaymath}
    \begin{array}{l}
      0\longrightarrow \cO(-A_0+\gamma)\longrightarrow
      \cO(\gamma)\longrightarrow \cO_{A_0}(\gamma)\longrightarrow
      0\\
      0\longrightarrow \cO(-A_0-C_1+\gamma)\longrightarrow
      \cO(-A_0+\gamma)\longrightarrow \cO_{C_1}(-A_0+\gamma)\longrightarrow 0
    \end{array}
  \end{displaymath}
  It follows from Lemma \ref{lem:Pardini} and Table \ref{tab:exc-coll}
  that $h^1(X, \cO(\gamma))=0$ and the restriction $\gamma|_{A_0}$ is
  not trivial.  Therefore $h^1(X, \cO(-A_0+\gamma))=0.$ From the
  second exact sequence, noting that $\deg \cO_{C_1}(-A_0+\gamma)<0$,
  we obtain that $h^1(X, \cO(-A_0-C_1+\gamma))=~0$.

  The case of the line bundles $L_2$ and $L_6$ works by symmetry. The
  $\Ext$ groups involving $L_6'$ are handled the same way.
\end{proof}

Lemmas \ref{lem:hom} and \ref{lem:ext} and calculation of Euler
characteristic immediately imply the following proposition.

\begin{proposition}\label{prop:ext2} For the exceptional collection
  $\Upsilon=(L_1, \ldots, L_6)$ the nontrivial Ext groups are the
  following
$$
\begin{array}{lll}
  1)& \Ext^2(L_i, L_j)\cong \kk & \text{for}\quad i=1,2;\; j=3,4,5\\
  2)& \Ext^2(L_j, L_6)\cong \kk^2 & \text{for}\quad j=3,4,5\\
  3)& \Ext^2(L_i, L_6)\cong \kk^3 & \text{for}\quad i=1,2.
\end{array}
$$
The same results hold after replacing $L_6$ by $L_6'$.
\end{proposition}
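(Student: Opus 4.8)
The plan is to derive Proposition~\ref{prop:ext2} as a formal consequence of Lemmas~\ref{lem:hom} and~\ref{lem:ext} together with a Riemann--Roch count of Euler characteristics; essentially all the real work has been done, and what is left is bookkeeping.

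First I would record the structural constraints. Since $X$ is a smooth projective surface, $\Ext^k(L_i,L_j)=0$ for $k\notin\{0,1,2\}$. Since $\Upsilon$ is an exceptional collection split into the three mutually orthogonal blocks $\{L_1,L_2\}$, $\{L_3,L_4,L_5\}$, $\{L_6\}$ (Theorem~\ref{thm:6-bundles}), the groups $\Ext^\bullet(L_i,L_j)$ already vanish for $i\ge j$ except $\Ext^0(L_i,L_i)=\kk$, and they vanish whenever $i,j$ lie in a common block. I would also note that on the diagonal $\Ext^2(L_i,L_i)=H^2(X,\cO_X)=0$, so nothing extra sneaks in there. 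Hence the only potentially nonzero groups are the $\Ext^k(L_i,L_j)$, $k\in\{0,1,2\}$, with $i<j$ in two different blocks: exactly the three families $\{1\le i\le2,\ 3\le j\le5\}$, $\{3\le i\le5,\ j=6\}$, $\{1\le i\le2,\ j=6\}$ occurring in the statement.

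For those pairs, Lemma~\ref{lem:hom} kills $\Hom(L_i,L_j)$ and Lemma~\ref{lem:ext} kills $\Ext^1(L_i,L_j)$, so
\[
  \dim\Ext^2(L_i,L_j)=\chi(L_i,L_j)=\chi\!\big(X,\,L_j\otimes L_i\inv\big),
\]
and it only remains to evaluate this Euler characteristic in the three cases. I would transport the computation to $Y$: by Lemma~\ref{lem:lift} this equals $\chi(Y,\bar L_i\otimes\bar L_j\inv)$ for the images $\bar L_i,\bar L_j\in\Pic Y$, and comparing the invariant $d(L)=L\cdot K_X=-\bar L\cdot K_Y$ with Table~\ref{tab:exc-coll} forces $\bar L_6=\cO_Y$, $\{\bar L_3,\bar L_4,\bar L_5\}$ to lie among the $\cO_Y(f_i)$, and $\{\bar L_1,\bar L_2\}$ among the $\cO_Y(h_j)$ (since $f_i\cdot(-K_Y)=2$ and $h_j\cdot(-K_Y)=3$, matching the values $d=0,2,3$ in the table). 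As the del Pezzo collection $\Sigma$ is \emph{strong}, $\chi(Y,\bar L_i\otimes\bar L_j\inv)=\hom_Y(\bar L_j,\bar L_i)$, and the required values are read off the list of $\Hom$'s defining $B_\Sigma$: $\hom(\cO_Y(f_i),\cO_Y(h_j))=1$, $\hom(\cO_Y,\cO_Y(f_i))=2$, $\hom(\cO_Y,\cO_Y(h_j))=3$, giving precisely cases 1), 2), 3). Alternatively one can evaluate the three Euler characteristics directly by Riemann--Roch on $X$, using $\chi(\cO_X(D))=1+\tfrac12 D(D-K_X)$ and the degree and intersection data implicit in Table~\ref{tab:exc-coll}. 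The case of $L_6'$ follows at once: $L_6'$ has the same image in $\Pic Y$ as $L_6$, so $\chi$ is unchanged (Lemma~\ref{lem:lift}), and Lemmas~\ref{lem:hom} and~\ref{lem:ext} are stated for $L_6'$ as well, so the argument applies verbatim.

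The proposition is really a corollary, so I do not expect a serious obstacle; the substance lived in Lemmas~\ref{lem:hom} and~\ref{lem:ext}. The only points needing a little care are (i) making sure the block structure and Serre duality leave no hidden nonzero $\Ext$-group, in particular checking $\Ext^2(L_i,L_i)=H^2(X,\cO_X)=0$, and (ii) keeping the direction of Lemma~\ref{lem:lift} and of the $Y$-side $\Hom$-dictionary straight, so that the counts come out $1,2,3$ rather than transposed.
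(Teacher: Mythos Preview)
Your argument is correct and is exactly the approach the paper has in mind: the paper's entire proof is the sentence ``Lemmas~\ref{lem:hom} and~\ref{lem:ext} and calculation of Euler characteristic immediately imply the following proposition,'' and you have spelled out that calculation, routing the Euler characteristic through Lemma~\ref{lem:lift} to the strong collection $\Sigma$ on $Y$. The only cosmetic remark is that you don't really need to \emph{argue} that $\bar L_1,\bar L_2$ land among the $\cO_Y(h_j)$ and $\bar L_3,\bar L_4,\bar L_5$ among the $\cO_Y(f_i)$ from the degree alone: this is how the collection $\Upsilon$ was built (by reversing the lift of $\Sigma$), so it holds by construction, and in any case your Riemann--Roch alternative makes the point moot.
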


Let us consider the DG algebra $\cB_{\Upsilon}=\mathbf{R}\Hom(T, T),$
where $T=\oplus_{i=1}^6 L_i.$ The minimal model $H^*(\cB_{\Upsilon})$
of this DG algebra is an $A_{\infty}$-algebra that by Proposition
\ref{prop:ext2} has only two nontrivial graded terms
$$
H^0(\cB_{\Upsilon})\cong \bigoplus_{i,j}\Hom(L_i, L_j)\cong\kk^6,\quad \text{ and}\quad
H^2(\cB_{\Upsilon})=\bigoplus_{i,j}\Ext^2(L_i, L_j).
$$
This $A_\infty$-algebra is actually an $A_\infty$-category on our 6
objects $L_1,...,L_6.$ The $H^0(\cB_{\Upsilon})$ is the sum of
identity morphisms of these 6 objects.  By standard theorems (see
\cite{Seidel_book} Lemma 2.1 or \cite{Lefevre} Th 3.2.1.1), any such
$A_\infty$-category is equivalent to a strict $A_\infty$-category.
This means we can assume that $m_l(...,
\operatorname{id}_{L_i},...)=0$ for all objects $L_i$ and all $l>2.$

\label{no-higher-ms}
Thus all nontrivial $m_l$ for $l>2$ can exist only if all elements are from  $H^2(\cB_{\Upsilon}).$
But they are also
trivial for dimension reasons: the degree of $m_l$ is $2-l$, so
$m_l(a_1,\ldots, a_k)\in H^{l+2}(\cB_{\Upsilon})=0.$
Thus, one has
$m_l=0$ for all $l>2.$
This means that the DG algebra $\cB_{\Upsilon}$ and the $A_{\infty}$-algebra
$H^*(\cB_{\Upsilon})$ are formal and are quasi-isomorphic to a usual graded
algebra. Moreover, all compositions of all elements of degree 2 are
0. Thus, the algebra $H^*(\cB_{\Upsilon})$ is not changed under a
deformation of a Burniat surface $X.$ Thus, we obtain the following:

\begin{proposition} Let $\Upsilon=(L_1, \ldots, L_6)$ be the
  exceptional collection constructed above. Then the DG algebra
  $\cB_{\Upsilon}=\mathbf{R}\Hom(T, T),$ where $T=\oplus_{i=1}^6 L_i,$
of
  endomorphism of this collection
is formal, i.e. it is quasi-isomorphic
  to its cohomology algebra $H^*(\cB_{\Upsilon}).$  The admissible subcategory
  $\cD\subset\mathbf{D}^b(\operatorname{coh}(X))$ generated by $\Upsilon=(L_1, \ldots, L_6)$ is the same for
  all Burniat surfaces $X,$ and $ \cD\cong
  \mathrm{Perf}(H^*(\cB_{\Upsilon})).$
\end{proposition}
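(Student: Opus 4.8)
The plan is to assemble the proposition from the three ingredients already in hand: the explicit list of $\Hom$'s and $\Ext$'s (Lemmas~\ref{lem:hom}, \ref{lem:ext}, and Proposition~\ref{prop:ext2}), the formality argument sketched in the paragraph labeled \texttt{no-higher-ms}, and the general Bondal--Kapranov machinery recalled in Section~3 relating an enhanced triangulated category with an exceptional collection to the category of perfect complexes over the DG endomorphism algebra. The statement has three assertions: (i) $\cB_{\Upsilon}$ is formal; (ii) the subcategory $\cD$ is independent of $X$; (iii) $\cD\cong\mathrm{Perf}(H^*(\cB_{\Upsilon}))$.

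For (i), I would argue as follows. Pass to a minimal $A_\infty$-model of $\cB_{\Upsilon}$; by Proposition~\ref{prop:ext2} its cohomology is concentrated in degrees $0$ and $2$, with $H^0$ the semisimple algebra $\kk^6$ spanned by the identities $\operatorname{id}_{L_i}$. Invoke the normalization theorem (\cite{Seidel_book} Lemma~2.1, \cite{Lefevre} Th.~3.2.1.1) to make the $A_\infty$-structure strictly unital, so $m_l$ vanishes whenever one argument is an identity; this forces every nonzero $m_l$ with $l>2$ to take all its arguments in $H^2$. But $m_l$ has degree $2-l$, so $m_l$ applied to $l$ elements of degree $2$ lands in $H^{2l+2-l}=H^{l+2}$, which is zero for $l\ge 1$. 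Hence all higher products vanish, $H^*(\cB_{\Upsilon})$ is an honest graded algebra, and $\cB_{\Upsilon}$ is quasi-isomorphic to it, i.e.\ formal. (Equivalently, since $H^2\cdot H^2\subset H^4=0$, the composition in the minimal model is entirely the $H^0$-bimodule structure on $H^2$, which is again visible in the Ext table.)

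For (ii) and (iii), I would first note that since $\cD$ is generated by an exceptional collection in the enhanced category $\mathbf{D}^b(\operatorname{coh}(X))$, the functor $\mathbf{R}\Hom(T,-)$ gives an equivalence $\cD\xrightarrow{\sim}\mathrm{Perf}(\cB_{\Upsilon})$, exactly as recalled on page~\pageref{no-higher-ms}; combined with formality this yields $\cD\cong\mathrm{Perf}(H^*(\cB_{\Upsilon}))$, proving (iii). For (ii), the point is that the graded algebra $H^*(\cB_{\Upsilon})$ does not depend on $X$: its underlying graded vector space is fixed by Proposition~\ref{prop:ext2}, and its multiplication is determined because the only products not forced to be zero by grading are those of the form $H^0\times H^2\to H^2$ and $H^2\times H^0\to H^2$, i.e.\ pre- and post-composition with identities, which are canonical. (The products $H^0\times H^0\to H^0$ are just the orthogonal idempotents $\operatorname{id}_{L_i}$, and $H^2\times H^2\to H^4=0$.) So $H^*(\cB_{\Upsilon})$ is literally the same $\kk$-algebra for every Burniat surface, and hence $\cD\cong\mathrm{Perf}(H^*(\cB_{\Upsilon}))$ is the same category for all of them.

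The main obstacle — and it is really the only place any care is needed — is making the dimension-counting vanishing of the higher $m_l$ rigorous, which requires knowing that one may choose a strictly unital minimal model; this is standard but should be cited precisely (the references \cite{Seidel_book}, \cite{Lefevre} already quoted suffice). A secondary subtlety is to make sure the claim ``the algebra is unchanged under deformation'' is stated at the level of the abstract graded algebra $H^*(\cB_{\Upsilon})$ rather than at the level of the embedding $\cD\hookrightarrow\mathbf{D}^b(\operatorname{coh}(X))$ — indeed the introduction emphasizes that the gluing of $\cD$ with the complement $\cA$ does vary — so the proof should conclude with the intrinsic statement $\cD\cong\mathrm{Perf}(H^*(\cB_{\Upsilon}))$ with $H^*(\cB_{\Upsilon})$ an explicitly described, $X$-independent algebra, and nothing stronger.
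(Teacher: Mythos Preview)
Your proposal is correct and follows essentially the same argument as the paper: the paper's proof is precisely the paragraph preceding the proposition (labeled \texttt{no-higher-ms}), which passes to a strictly unital minimal $A_\infty$-model, observes that cohomology is concentrated in degrees $0$ and $2$ with $H^0=\kk^6$ semisimple, and then kills all higher products by the same degree count $m_l(H^2,\ldots,H^2)\subset H^{l+2}=0$. Your elaboration of why the resulting graded algebra is literally $X$-independent (only idempotent actions survive) and your invocation of $\mathbf{R}\Hom(T,-)\colon\cD\xrightarrow{\sim}\mathrm{Perf}(\cB_\Upsilon)$ match the paper's reasoning exactly, just spelled out more fully.
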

\begin{remark}
  Note that a Burniat surface $X$ can be reconstructed from the
  bounded derived category of coherent sheaves
  $\mathbf{D}^b(\operatorname{coh}(X)),$ because the canonical class
  $K_X$ is ample (see \cite{BondalOrlov}).
\end{remark}

As it was mentioned above, the subcategory $\cD$ generated by
an exceptional collection is admissible, i.e. the embedding functor
has right and left adjoint functors. Thus we obtain a semiorthogonal
decomposition
$$
\mathbf{D}^b(\operatorname{coh}(X))=\langle \cD, \cA \rangle
$$
where $\cA$ is a left orthogonal to $\cD,$ i.e. $\cA$ consists of all
object $A$ such that $\Hom(A, D)=0$ for all objects $D\in \cD.$

A semiorthogonal decomposition implies direct sum decompositions for
K-theory and Hochschild homology (see, for example, \cite{Kuznetsov} for Hochschild homology).
Thus, we obtain
$$
K_0(X)=K_0(\cD)\oplus K_0(\cA), \quad\text{and} \quad
\mathrm{HH}_*(X)=\mathrm{HH}_*(\cD)\oplus\mathrm{HH}_*(\cA)
$$
All these invariants are defined for $\cD$ and $\cA,$ because they
have induced DG enhancements.

There is an isomorphism for Hochschild homology of a smooth projective
variety
$$
\mathrm{HH}_i(X)=\bigoplus_p H^{p+i}(X, \Omega^p X)
$$
For a Burniat surface it gives that $\mathrm{HH}_i(X)=0$ when $i\ne
0,$ and $\mathrm{HH}_0(X)\cong \kk^6.$ The subcategory $\cD$ as a
subcategory generated by an exceptional collection of length 6 has the
same Hochschild homology.  Hence, we obtain that
$\mathrm{HH}_*(\cA)=0.$

Recall that Bloch conjecture for Burniat surfaces was proved by Inose
and Mizukami in \cite{InoseMizukami}, i.e. the Chow group $\mathrm{CH}^2(X)$ of 0-cycles is isomorphic to~$\mathbb{Z}.$  Therefore, for $K$-theory
we obtain that $K_0(X)\cong \mathbb{Z}^6\oplus \mathbb{Z}_2^6.$ It is
evident that $K_0(\cD)\cong \mathbb{Z}^6.$   Hence,
$K_0(\cA)\cong \mathbb{Z}_2^6.$

Let us summarize what we have just established.
\begin{theorem} For any Burniat surface $X$ we have a semiorthogonal
  decomposition
$$
\mathbf{D}^b(\operatorname{coh}(X))=\langle \cD, \cA \rangle,
$$
where $\cD$ is a subcategory generated by an exceptional collection
$\Upsilon=(L_1, \ldots, L_6).$ The category $\cD$ is the same for all
Burniat surfaces. The category $\cA$ has trivial Hochschild homology
and $K_0(\cA)=\mathbb{Z}_2^6.$ The same holds after replacing $L_6$
by~$L_6'$.
\end{theorem}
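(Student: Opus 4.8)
The plan is to package together the results established above. By Theorem~\ref{thm:6-bundles}, $\Upsilon = (L_1,\dots,L_6)$ and, likewise, $\Upsilon' = (L_1,\dots,L_5,L_6')$ are exceptional collections of line bundles on $X$; hence the full triangulated subcategory $\cD \subset \mathbf{D}^b(\operatorname{coh}(X))$ generated by either one is admissible, the inclusion $j\colon \cD \hookrightarrow \mathbf{D}^b(\operatorname{coh}(X))$ having both a left and a right adjoint \cite{BondalKapranov_Serre_functor}. Setting $\cA = {}^\perp\cD$ (the left orthogonal), admissibility gives the semiorthogonal decomposition $\mathbf{D}^b(\operatorname{coh}(X)) = \langle \cD, \cA \rangle$. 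Both summands inherit DG enhancements from $\mathbf{D}^b(\operatorname{coh}(X))$, so $K_0$ and $\mathrm{HH}_*$ are defined for them and split as direct sums, $K_0(X) = K_0(\cD)\oplus K_0(\cA)$ and $\mathrm{HH}_*(X) = \mathrm{HH}_*(\cD)\oplus\mathrm{HH}_*(\cA)$ (see \cite{Kuznetsov}).

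To see that $\cD$ is the same for every Burniat surface, I would use the computation of $\cB_\Upsilon = \mathbf{R}\Hom(T,T)$, $T = \oplus_i L_i$, carried out above: by Proposition~\ref{prop:ext2} its cohomology is concentrated in degrees $0$ and $2$, and the degree count on page~\pageref{no-higher-ms} then forces all higher $A_\infty$-products $m_l$, $l > 2$, to vanish. So $\cB_\Upsilon$ is formal, quasi-isomorphic to the graded algebra $H^*(\cB_\Upsilon)$; its multiplication table consists of the composition laws among the $\Ext$-spaces listed in Proposition~\ref{prop:ext2}, with all products of degree-$2$ classes equal to zero, and this description manifestly does not depend on the moduli of $X$. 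Through the equivalence $\cD \cong \mathrm{Perf}(H^*(\cB_\Upsilon))$ this shows $\cD$ is independent of $X$; the same applies verbatim to $\Upsilon'$.

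It remains to identify the invariants of $\cA$. For Hochschild homology, the Hochschild--Kostant--Rosenberg isomorphism $\mathrm{HH}_i(X) = \bigoplus_p H^{p+i}(X,\Omega^p_X)$ together with $p_g = q = 0$ and $h^{1,1}(X) = 4$ (which equals $b_2(X)$, the Picard number) gives $\mathrm{HH}_i(X) = 0$ for $i\ne 0$ and $\mathrm{HH}_0(X) \cong \kk^6$. A subcategory generated by an exceptional collection of length $6$ has $\mathrm{HH}_0(\cD) \cong \kk^6$ and $\mathrm{HH}_i(\cD) = 0$ for $i\ne0$, so the direct sum decomposition forces $\mathrm{HH}_*(\cA) = 0$. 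For $K$-theory, the Bloch conjecture for Burniat surfaces, proved by Inose and Mizukami \cite{InoseMizukami}, gives $\mathrm{CH}^2(X) \cong \bZ$; combined with $\mathrm{CH}^1(X) = \Pic X \cong \bZ^4\oplus\bZ_2^6$ (Theorem~\ref{thm:PicX}) and the splitting $K_0(X) \cong \bZ\oplus\Pic X\oplus\mathrm{CH}^2(X)$ of the codimension filtration for a surface (by rank, determinant and $c_2$), this yields $K_0(X) \cong \bZ^6\oplus\bZ_2^6$. Since $K_0(\cD) \cong \bZ^6$ is free of rank equal to the length of the exceptional collection, $K_0(X) = K_0(\cD)\oplus K_0(\cA)$ forces $K_0(\cA) \cong \bZ_2^6$, and the same for $\Upsilon'$.

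The bulk of this is bookkeeping; the two places where something nontrivial enters are the Hodge-theoretic input via HKR (harmless, since the Hodge numbers of $X$ are classical) and, more essentially, the Bloch conjecture for these surfaces from \cite{InoseMizukami}. Without knowing that $\mathrm{CH}^2(X)$ is exactly $\bZ$ --- in particular has no extra torsion or divisible part --- one could pin down neither $K_0(X)$ nor $K_0(\cA)$ on the nose; this is the real obstacle. Everything internal to the paper (the exceptional collection, the $\Ext$-vanishing behind Proposition~\ref{prop:ext2}, formality of $\cB_\Upsilon$, and admissibility of $\cD$) has already been established.
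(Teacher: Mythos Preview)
Your proof is correct and follows essentially the same route as the paper: admissibility of $\cD$ from the exceptional collection, the direct sum splittings of $K_0$ and $\mathrm{HH}_*$ under semiorthogonal decomposition, HKR for $\mathrm{HH}_*(X)$, the Inose--Mizukami result on $\mathrm{CH}^2(X)$ for $K_0(X)$, and formality of $\cB_\Upsilon$ for the constancy of $\cD$. The only added detail beyond the paper's own discussion is your explicit invocation of the split codimension filtration $K_0(X)\cong \bZ\oplus\Pic X\oplus\mathrm{CH}^2(X)$, which the paper leaves implicit.
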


\section{Further remarks}

It is proved in \cite{Orlov_gendim} that for any quasi-projective
scheme $Z$ of dimension $d$ and a very ample line bundle $L$, the
object $\cE=\oplus_{i=0}^{d} L^{i}$ is a classical generator for the
triangulated category of perfect complexes $\mathrm{Perf}(Z).$ This
means that $\cE$ idempotently generates the category
$\mathrm{Perf}(Z),$ i.e. the minimal idempotent complete triangulated
subcategory containing it coincides with $\mathrm{Perf}(Z)$ (see
\cite{Orlov_gendim} for details).

This result can be easily made a little stronger: it is sufficient to
assume that $L=f^*L'$ is a pull back of a very ample line bundle $L'$
under a finite morphism $f: Z\to W$ (i.e. that $L$ is an ample and semiample line
bundle). Indeed, since $L'$ is very ample on $W$, the object
$\cE'= \oplus_{i=0}^{d} (L')^{i}$ idempotently generates the category
$\Perf(W)$. Hence, all $L^m=f^*(L')^m$ belong to the idempotent
complete subcategory generated by $\cE=f^*\cE'$. Some power of $L$ is very
ample, so the minimal derived category containing $\cE=\oplus_{i=0}^{d} L^{i}$ is the whole $\Perf(Z)$.

On a Burniat surface, $\cO_X(2K_X)=f^*\cO_Y(-K_Y)$ is ample and
semiample. Thus, $\cE=\cO_X\oplus\cO_X(2K_X)\oplus\cO_X(4K_X)$
is a classical generator for $\Perf(X)=\mathbf{D}^b(\operatorname{coh}(X)).$


By Keller's results \cite{Keller}, this means that the bounded derived
category \linebreak $\mathbf{D}^b(\operatorname{coh}(X))$ is equivalent to the
triangulated category of perfect objects $\mathrm{Perf}(\cB_{\cE})$
over the DG algebra of endomorphisms $\cB_{\cE}=\mathbf{R}\Hom(\cE,
\cE).$ Furthermore, we can try to produce a classical generator for
the full admissible subcategory $\cA$ considering a projection of
$\cE$ to $\cA.$ Since $\cO_X$ belongs to $\cD$, its projection is
trivial and we have to take in account only projections of
$\cO_X(2K_X)$ and $\cO_X(4K_X)$ to $\cA.$ It will be very
interesting to consider this generator for the subcategory $\cA$ and
to calculate the DG (or $A_{\infty}$) algebra of endomorphisms for it.

\bigskip

One can speculate whether our results are but one example of
a general phenomenon. 

\begin{question}
  Is it true that for any exceptional collection $\Upsilon$ of maximal
  length on a smooth surface $X$ with ample $K_X$ and with $p_g=q=0$,
  the DG algebra of endomorphisms of $\Upsilon$ does not change under
  small deformations of the complex structure on $X$?
\end{question}

Certainly, if the surface $X$ is a ``fake del Pezzo surface'' and the
exceptional collection $\Upsilon=(L_n,\dotsc, L_1)$ lifts a nice enough
exceptional collection $(\bar L_1,\dotsc, \bar L_n)$ on the corresponding
del Pezzo surface $Y$, then as in Lemma \ref{lem:hom} it follows that
$\Hom(L_i,L_j)=0$. If one is lucky and $\Ext^1(L_i,L_j)=0$ as well then,
as we explained on page~\pageref{no-higher-ms}, the DG
algebra of endomorphisms of $\Upsilon$ is formal and, moreover, it
does not have any deformations.

But even if we are not in such a ``lucky'' situation, the general
take-away from our computations seems to be that under the
correspondence between a true del Pezzo surface $Y$ and its ``fake
partner'' $X$ that switches $-K_Y$ and $K_X$, the groups $\Ext^0(\bar
L_j,\bar L_i)$, $i\ne j$, switch their places with
$\Ext^2(L_i,L_j)$. On a del Pezzo, the $\Hom$ groups for \emph{some}
exceptional collection completely describe
$\mathbf{D}^b(\operatorname{coh}(Y))$ and thus also the position of
$Y$ in the moduli space. But on the surface of general type, all this
information is pushed to the $\Ext^2$'s which for degree reasons lead
to a rigid DG algebra and say \emph{nothing} about the position of $X$
in the moduli space.

\bibliographystyle{amsalpha} \renewcommand{\MR}[1]{}
\providecommand{\bysame}{\leavevmode\hbox to3em{\hrulefill}\thinspace}
\providecommand{\MR}{\relax\ifhmode\unskip\space\fi MR }
\providecommand{\MRhref}[2]{%
  \href{http://www.ams.org/mathscinet-getitem?mr=#1}{#2}
}
\providecommand{\href}[2]{#2}

\end{document}